\theoremstyle{plain}
\newtheorem{prop}{Proposition}[section]
\newtheorem{lem}[prop]{Lemma}
\newtheorem{conj}[prop]{Conjecture}
\newtheorem{cor}[prop]{Corollary}
\newtheorem{thm}[prop]{Theorem}
\theoremstyle{definition}
\theoremstyle{remark}
\newtheorem{remark}[prop]{Remark}
\newtheorem{example}[prop]{Example}
\title[{G}rothendieck classes of quiver cycles as iterated residues]{{G}rothendieck classes of quiver cycles as iterated residues}
\author{Justin Allman}
\address{Department of Mathematics, UNC--Chapel Hill \\ Phillips Hall CB\#3250 \\ Chapel Hill, NC 27599--3250}
\email{jallman@email.unc.edu}
\begin{document}

\maketitle

\begin{abstract}
In the case of Dynkin quivers we establish a formula for the {G}rothendieck class of a quiver cycle as the iterated residue of a certain rational function, for which we provide an explicit combinatorial construction. Moreover, we utilize a new definition of the double stable Grothendieck polynomials due to Rim\'{a}nyi and Szenes in terms of iterated residues to exhibit how the computation of quiver coefficients can be reduced to computing coefficients in Laurent expansions of certain rational functions. 
\end{abstract}

\section*[Introduction]{Introduction}\label{s:Intro}

Let $Q$ be a quiver with a finite vertex set $Q_0 = \{1,\ldots,N\}$ and finite set of arrows $Q_1$, each of which has a \emph{head} and \emph{tail} in $Q_0$. For $a\in Q_1$, these vertices are denoted $h(a)$ and $t(a)$ respectively. Throughout the sequel we will refer also to the set
\begin{equation}\label{eqn:T(i)}
T(i) = \{j\in Q_0 \,|\, \exists a \in Q_1 \text{~with~} h(a)=i \text{~and~} t(a)=j\}.
\end{equation}
Given a \emph{dimension vector} of non-negative integers $\al{v}=(v_1,\ldots,v_N)$, define vector spaces $E_i = \C^{v_i}$ and the affine \emph{representation space} $V = \Dirsum_{a\in Q_1} \Hom(E_{t(a)},E_{h(a)})$ with a natural action of the algebraic group $\al{G}=GL(E_1)\cross\cdots\cross GL(E_N)$ given by
\begin{equation}
\label{eq:Gact}(g_i)_{i\in Q_0}\cdot(\phi_a)_{a\in Q_1} = (g_{h(a)}\phi_a g_{t(a)}^{-1})_{a\in Q_1}.
\end{equation}
A \emph{quiver cycle} $\Omega \subset V$ is a $\al{G}$-stable, closed, irreducible subvariety and, as such, has a well defined structure sheaf $\curly O_\Omega$. The goal of this paper is the calculation of the class $$[\curly O_{\Omega}] \in K_{\al{G}}(V),$$ in the $\al{G}$-equivariant Grothendieck ring of $V$. To accomplish this, we reformulate the problem in an equivalent setting; we realize $[\curly O_\Omega]$ as the $K$-class associated to a certain degeneracy locus of a quiver of vector bundles over a smooth complex projective base variety $X$.

Formulas for this class exist already in the literature, the most general of which is due to Buch \cite{BuchQCDT}, and which we now explain. Buch's result is given in terms of the stable version of Grothendieck polynomials first invented by Lascoux and Sch\"{u}tzenberger as representatives of structure sheaves of Schubert varieties in a flag manifold \cite{LaSch} which are applied to the $E_i$ in an appropriate way. For a comprehensive introduction to the role of Grothendieck polynomials in $K$-theory, see \cite{BuchCKT}.

The stable Grothendieck polynomials $G_\lambda$ are indexed by partitions, i.e.\ non-increasing sequences of non-negative integers $\lambda = (\lambda_1\geq \lambda_2 \geq \cdots )$ with only finitely many parts nonzero. The number of nonzero parts is called the \emph{length} of the partition and denoted $\ell(\lambda)$. For each $i\in Q_0$, form the vector space $M_i = \Dirsum_{j\in T(i)} E_j$. With this notation, Buch shows that for \emph{unique} integers $c_\mu(\Omega)\in \Z$ one has
\begin{equation}\label{eqn:quivcoef}
[\curly O_\Omega] = \sum_{\mu} c_\mu(\Omega) G_{\mu_1}( E_1- M_1)\cdots G_{\mu_N}( E_N -  M_N) \in K_{\al{G}}(V)
\end{equation}
where the sum is taken over all sequences of partitions $\mu=(\mu_1,\ldots,\mu_N)$ subject to the constraint that $\ell(\mu_i) \leq v_i$ for all $1\leq i \leq N$. The integers $c_\mu(\Omega)$ are called the \emph{quiver coefficients}. In the case that $Q$ is a \emph{Dynkin quiver}, that is, its underlying non-oriented graph is one of the simply-laced Dynkin diagrams (of type $A$, $D$, or $E$), Buch shows that the sum above is finite. The central question in the theory is, if one assumes that $\Omega$ has rational singularities, are the quiver coefficients \emph{alternating}? In this setting, alternating is interpreted to mean that $(-1)^{|\mu|-\codim(\Omega)}c_\mu(\Omega) \geq 0$ for all $\mu$, where $|\mu|=\sum_i |\mu_i|$ and $|\mu_i|$ is the area of the corresponding Young diagram. An answer to this question supersedes many of the other positivity conjectures in this vein, in particular, whether or not the cohomology class $[\Omega]\in H^{*}_{\al{G}}(V)$ is Schur positive, since the leading term of $G_\lambda$ is the Schur function $s_\lambda$ and the coholomology class $[\Omega]$ can be interpreted as a certain leading term of the $K$-class $[\curly O_\Omega]$. For this reason, the quiver coefficients $c_\mu(\Omega)$ for which $|\mu| = \codim(\Omega)$ are called the \emph{cohomological quiver coefficients}.

The goal of this paper is to give a new formula for $[\curly O_\Omega]$ in terms of iterated residue operations. The motivation is plain--namely there has been some considerable recent success in attacking positivity and stability results in analogous settings once armed with such a formula.

In \cite{FRLMS}, Feh\'{e}r and Rim\'{a}nyi discover that Thom polynomials of singularities share unexpected stability properties, and this is made evident through non-conventional generating sequences. The ideas of \cite{FRLMS} are further developed and organized in \cite{BS}, \cite{FRAnn}, and \cite{KazTP} where the generating sequence formulas appear under the name \emph{iterated residue}. In particular, in \cite{BS} B\'{e}rczi and Szenes prove new positivity results for certain Thom polynomials, and Kazarian is able to calculate new classes of Thom polynomials in \cite{KazTP} through iterated residue machinery developed in \cite{KazGHD}.

Even more recently, a new formula for the cohomology class of the quiver cycle $[\Omega]\in H^{*}_{\al{G}}(V)$ as an iterated residue has been reported in \cite{qr3}, and some new promising initial results on Schur positivity have been obtained from this formula in \cite{Kal}. Moreover in \cite{coha}, Rim\'{a}nyi describes an explicit connection between the iterated residue formula for cohomological quiver coefficients of \cite{qr3} and certain structure constants in the Cohomological Hall algebra (COHA) of Kontsevich and Soilbelman \cite{KScoha}.

The organization of the paper is as follows. In Section \ref{s:QRDL} we describe quiver representations in some more detail and define the degeneracy loci associated to them. In Section \ref{s:RS} we discuss an algorithm of Reineke to resolve the singularities of the degeneracy loci in question, which produces a sequence of well-understood maps that we eventually utilize for our calculations. In Section \ref{s:IR} we define our iterated residue operations and provide some illustrative examples of their application. In Section \ref{s:MT} we present the statement of the main result and by example, compare our method to previous formulae, most notably that of \cite{BuchQCDT} and the cohomological iterated residue formula from \cite{qr3}. In Section \ref{s:ELIR} we describe how the push-forward (or Gysin) maps associated to Grassmannian fibrations are calculated with equivariant localization and translated to the language of iterated residues, and in Section \ref{s:PMT} we provide the proof of the main theorem. In Section \ref{s:GPIR} we use a new definition of Grothendieck polynomials proposed by Rim\'{a}nyi and Szenes to exhibit that our formula produces an explicit rational function whose coefficients, once expanded as a multivariate Laurent series, correspond to the quiver coefficients. We expect that further analysis of these rational functions will produce new positivity results regarding the quiver coefficients.

The author thanks Alex Fink, Ryan Kaliszewski, and Rich\'{a}rd Rim\'{a}nyi for helpful conversations related to this topic and Merrick Brown for computational advice.

\section[Quiver reps.\ and deg.\ loci]{Quiver representations and degeneracy loci}\label{s:QRDL}

\subsection{Quiver cycles for Dynkin quivers}\label{ss:QR}
In this paper we will consider only Dynkin quivers, which always have finite sets of vertices and arrows, and contain no cycles. Throughout the sequel, $Q$ denotes a Dynkin quiver with vertices $Q_0=\{1,\ldots,N\}$ and arrows $Q_1$, $\al{v}=(v_1,\ldots,v_N)\in \N^N$ denotes a dimension vector and $V$ denotes the corresponding representation space.

Let $\Omega$ be a quiver cycle. For technical reasons, we henceforth assume that $\Omega$ is Cohen-Macaulay with rational singularities. In the case of Dynkin quivers, Gabriel's theorem \cite{Gab} implies that there are only finitely many stable $\al{G}$-orbits and as a consequence, every quiver cycle must be a $\al{G}$-orbit closure (and conversely). Moreover, the orbits have an explicit description, as follows.

Let $\{\varphi_i : 1\leq i \leq N\}$ denote the set of simple roots of the corresponding root system and $\Phi^+$ the set of positive roots. For any positive root $\varphi$, one obtains integers $d_1(\varphi),\ldots,d_N(\varphi)$ defined uniquely by $\varphi = \sum_{i=1}^N d_i(\varphi)\varphi_i$. The $\al{G}$-orbits in $V$ are in one-to-one correspondence with vectors \begin{equation} m=(m_\varphi)\in \N^{\Phi^+}, \;\;\; \text{such that} \;\;\; \sum_{\varphi\in\Phi^+}m_\varphi d_i(\varphi) = v_i, \; \; \text{for each $1 \leq i \leq N$}.\nonumber \end{equation} Observe that the list of orbits does not depend on the orientation of the arrows of $Q$ but only on the underlying non-oriented graph. Throughout the sequel, we will denote the orbit-closure corresponding to $m \in \N^{\Phi^+}$ by $\Omega_m$.

\subsection{Degeneracy loci associated to quivers}\label{ss:DL}

Let $X$ be a smooth complex projective variety, and let $K(X)$ denote the Grothendieck ring of algebraic vector bundles over $X$. A \emph{$Q$-bundle} $(\curly E_\bullet,f_\bullet) \to X$ is the following data: \begin{itemize} \item for each $i\in Q_0$ a vector bundle $\curly E_i\to X$ with $\rk(\curly E_i) = v_i$, and \item for each arrow $a\in Q_1$, a map of vector bundles $f_a:\curly E_{t(a)} \to \curly E_{h(a)}$ over $X$. \end{itemize} Let $(\curly E_\bullet,f_\bullet)_x$ denote the \emph{fiber} of the $Q$-bundle at the point $x\in X$; this consists of vector spaces $(\curly E_1)_x,\ldots,(\curly E_N)_x$ (the fibers of the vector bundles) and also a linear map $(f_a)_x:(\curly E_{t(a)})_x\to(\curly E_{h(a)})_x$ for each $a\in Q_1$. Corresponding to the quiver cycle $\Omega \subset V$, define the \emph{degeneracy locus}
\begin{equation}\label{eqn:dldefn}
\Omega(\curly E_\bullet) = \{x \in X \; | \; (\curly E_\bullet,f_\bullet)_x \in \Omega\}.
\end{equation}
Observe that the fiber $(\curly E_\bullet,f_\bullet)_x$ only belongs to $V=\Dirsum_{a\in Q_1}\Hom(E_{t(a)},E_{h(a)})$ once one specifies a basis in each vector space $(\curly E_i)_x$. However, the degeneracy locus above is well-defined since the action of $\al{G}$ on $V$ described by equation (\ref{eq:Gact}) can interchange any two choices for bases, and $\Omega$ is $\al{G}$-stable. The relevance of the degeneracy locus $\Omega(\curly E_\bullet)$ is

\begin{prop}[Buch]
If $X$ and $\Omega$ are both Cohen-Macaulay and the codimension of $\Omega(\curly E_\bullet)$ in $X$ is equal to the codimension of $\Omega$ in $V$, then $$[\curly O_{\Omega(\curly E_\bullet)}] = \sum_{\mu} c_\mu(\Omega) G_{\mu_1}(\curly E_1- \curly M_1)\cdots G_{\mu_N}(\curly E_N - \curly M_N) \in K(X)$$ where $\curly M_i = \Dirsum_{j\in T(i)}\curly E_j$ and the $c_\mu(\Omega)$ are exactly the quiver coefficients defined by Equation (\ref{eqn:quivcoef}).
\end{prop}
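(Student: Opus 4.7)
The plan is to deduce the formula from the equivariant identity (\ref{eqn:quivcoef}) via the standard associated-bundle (Borel) construction, combined with a Tor-independence argument that promotes the virtual section pullback to an actual identification of structure sheaf classes.

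First I would package the $Q$-bundle data universally. Let $P \to X$ be the $\al{G}$-principal bundle whose fiber over $x \in X$ parametrizes compatible ordered bases for $(\curly E_1)_x,\ldots,(\curly E_N)_x$. The associated vector bundle $\curly V := P \times_{\al{G}} V$ is canonically identified with $\Dirsum_{a \in Q_1}\Hom(\curly E_{t(a)},\curly E_{h(a)})$, and the arrow maps $f_a$ assemble into a tautological section $s : X \to \curly V$. Setting $\curly\Omega := P\times_{\al{G}}\Omega \subset \curly V$, the very definition (\ref{eqn:dldefn}) reads $\Omega(\curly E_\bullet) = s^{-1}(\curly\Omega)$. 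The associated-bundle functor induces a ring map $\theta : K_{\al{G}}(V) \to K(\curly V)$ sending $[\curly O_\Omega]\mapsto[\curly O_{\curly\Omega}]$ and the virtual representation $E_i - M_i$ to the virtual bundle $\curly E_i - \curly M_i$.

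Second, since $\curly V \to X$ is a vector bundle, $s$ is a regular embedding and admits a refined pullback $s^* : K(\curly V) \to K(X)$. Applying $s^*\circ\theta$ to both sides of (\ref{eqn:quivcoef}) and using that the Grothendieck polynomials are functorial in their virtual arguments yields
\begin{equation*}
s^*[\curly O_{\curly\Omega}] = \sum_\mu c_\mu(\Omega)\, G_{\mu_1}(\curly E_1 - \curly M_1)\cdots G_{\mu_N}(\curly E_N - \curly M_N) \in K(X).
\end{equation*}
It remains to identify $s^*[\curly O_{\curly\Omega}]$ with $[\curly O_{\Omega(\curly E_\bullet)}]$, equivalently to show that all higher terms in the derived expansion $s^*[\curly O_{\curly\Omega}] = \sum_{i\geq 0}(-1)^i[\mathrm{Tor}_i^{\curly O_{\curly V}}(\curly O_{s(X)},\curly O_{\curly\Omega})]$ vanish. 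This is obtained from the classical Cohen-Macaulay Tor-rigidity theorem: since the associated-bundle construction is locally trivial, $\curly\Omega$ inherits Cohen-Macaulayness from $\Omega$; the subvariety $s(X)\cong X$ is Cohen-Macaulay by hypothesis; and the codimension identity $\codim(\Omega(\curly E_\bullet),X) = \codim(\Omega,V) = \codim(\curly\Omega,\curly V)$ ensures that the intersection $s(X)\cap\curly\Omega$ occurs in the expected codimension inside the smooth ambient variety $\curly V$. These conditions together force all higher Tors to vanish, so that $s^*[\curly O_{\curly\Omega}] = [\curly O_{s^{-1}(\curly\Omega)}] = [\curly O_{\Omega(\curly E_\bullet)}]$.

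The main obstacle is precisely this Tor-vanishing step. It is where both the Cohen-Macaulay assumption and the expected-codimension hypothesis on $\Omega(\curly E_\bullet)$ are genuinely consumed: without either, the section pullback would carry nontrivial corrections from higher Tor sheaves and the clean identification of structure sheaf classes on the nose would fail, leaving only an identity of virtual classes in $K(X)$.
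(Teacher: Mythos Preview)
The paper does not prove this proposition itself; it is stated as a result attributed to Buch and is followed only by the remark that the Cohen--Macaulay hypothesis on $\Omega$ is the reason for the paper's standing technical assumption. There is therefore no proof in the paper to compare your proposal against.

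That said, your outline is the standard argument for transporting an equivariant identity of this kind to the associated-bundle setting, and it is essentially correct. Two small points are worth sharpening. First, the Cohen--Macaulay hypothesis on $X$ is not really what drives the Tor-vanishing (and in this paper $X$ is already assumed smooth); what you actually use is that $s(X)$ is \emph{regularly embedded} in $\curly V$, automatic for a section of a vector bundle, so that it admits a Koszul resolution by locally free sheaves on $\curly V$. Restricting this complex to $\curly\Omega$, the Cohen--Macaulay hypothesis on $\Omega$ (hence on $\curly\Omega$) together with the expected-codimension assumption guarantees that the local equations for $s(X)$ form a regular sequence on $\curly\Omega$, which is precisely what kills the higher Tors. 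Second, ``classical Cohen--Macaulay Tor-rigidity theorem'' is a slight misnomer for the fact you are invoking; the precise ingredient is that in a Cohen--Macaulay local ring every system of parameters is a regular sequence.
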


The hypothesis of the above result is the reason for our technical assumption that $\Omega$ be Cohen-Macaulay. The goal of this paper is to give a new formula for the class corresponding to the structure sheaf of $\Omega(\curly E_\bullet)$ in the Grothendieck ring $K(X)$, and hence by the uniqueness of the quiver coefficients, a new formula for $[\curly O_\Omega] \in K_{\al{G}}(V)$.

\begin{remark}[Notation and genericity]\label{rem:genericity}
A choice of maps $f_\bullet$ for a $Q$-bundle amounts to a section of $\curly V = \Dirsum_{a\in Q_1}\Hom(\curly E_{t(a)},\curly E_{h(a)})$ and the choices $f_\bullet$ for which the degeneracy locus $\Omega(\curly E_\bullet)$ has its expected codimension in $X$ form a Zariski open subset of the space of all sections. When $f_\bullet$ represents such a choice, we call $(\curly E_\bullet,f_\bullet)\to X$ a \emph{generic} $Q$-bundle, and in this case, the $K$-class of the degeneracy locus is independent of the maps. We will consider only this situation, and therefore are justified in omitting any decoration referring to $f_\bullet$ in our notation, e.g.\ as in the definition of Equation (\ref{eqn:dldefn}).
\end{remark}

\section[Resolution of singularities]{Resolution of singularities}\label{s:RS}

In general, the degeneracy locus $\Omega(\curly E_\bullet)$ defined by (\ref{eqn:dldefn}) is singular, though in the case of Dynkin quivers some ``worst-case scenario" results have been established. For example, it is known \cite{BZA} that over any algebraically closed field $\Omega(\curly E_\bullet)$ has at worst rational singularities when $Q$ is of type $A$, and when one assumes additionally that the field has characteristic zero the same is true for type $D$ \cite{BZD}. We work exclusively over $\C$ so the additional technical assumption that $\Omega$ have rational singularities is necessary only when $Q$ is of exceptional type (i.e.\ its underlying non-oriented graph is the Dynkin diagram for $E_6$, $E_7$, or $E_8$).

The proof of our main theorem will depend on a construction originally due to Reineke \cite{Re} to resolve the singularities, but we follow a slightly more general approach as in \cite{BuchQCDT} and adapt it specifically for $Q$-bundles. For still more details, see also \cite{qr3}.

Let $(\curly E_\bullet,f_\bullet)\to X$ be a generic $Q$-bundle. Given $i\in Q_0$ and an integer $1\leq r \leq v_i$, we construct the \emph{Grassmannization} $\Gr{v_i-r}{\curly E_i}\to X$ with tautological exact sequence $\curly S\to\curly E\to\curly Q$. Here $\curly S$ is the \emph{tautological subbundle} (whose rank is $s=v_i-r$) and $\curly Q$ is the \emph{tautological quotient bundle} (whose rank is $r$). Define $X_{i,r}(\curly E_\bullet,f_\bullet)=X_{i,r}$ to be the zero scheme $Z(\curly M_i\to\curly Q)\subset \Gr{s}{\curly E_i}$ where $\curly M_i = \Dirsum_{j\in T(i)}\curly E_j$. Observe that over $X_{i,r}\subset \Gr{s}{\curly E_i}$ we obtain an \emph{induced} $Q$-bundle $(\tilde{\curly E}_{\bullet},\tilde{f}_\bullet)$ defined by the following: 
\begin{itemize}
\item for $j\neq i$, set $\tilde{\curly E_j} = \curly E_j$,
\item set $\tilde{\curly E_i} = \curly S$,
\item if $a\in Q_1$ such that $h(a)\neq i$ and $t(a)\neq i$, then $\tilde{f}_a = f_a$,
\item if $t(a) = i$, set $\tilde{f}_a= f_a|_{\curly S}$,
\item if $h(a) = i$, set also $\tilde{f}_a=f_a$.
\end{itemize}
The last bullet is well-defined (and this is the key point) since $y\in Z(\curly M_i\to\curly Q)$ implies that in the fiber over $y$, the image of $(f_a)_y:({\curly E}_{t(a)})_y \to (\curly E_i)_y$ must lie in $\curly S_y$. Let $\rho_i^r:X_{i,r}\to X$ denote the natural map given by the composition $X_{i,r}=Z(\curly M_i,\curly Q) \includes \Gr{s}{\curly E_i} \to X$.

More generally, let $\al{i}=(i_1,\ldots,i_p)$ be a sequence of quiver vertices, and $\al{r}=(r_1,\ldots,r_p)$ a sequence of non-negative integers subject to the restriction that for each $i\in Q_0$, we have $v_i \geq \sum_{i_\ell = i}r_\ell$. We can now inductively apply the construction above to obtain a new variety $$X_{\al{i},\al{r}} = (\cdots((X_{i_1,r_1})_{i_2,r_2})\cdots)_{i_p,r_p}.$$ Let $\rho_{\al{i}}^{\al{r}}:X_{\al{i},\al{r}}\to X$ denote the natural mapping obtained from the composition $\rho_{i_1}^{r_1}\compose\cdots\compose\rho_{i_p}^{r_p}$.

Now identify each simple root $\varphi_i \in \Phi^+$ for $1\leq i \leq N$ with the standard unit vector in $\N^N$ with $1$ in position $i$ and $0$ elsewhere. For dimension vectors $\al{u},\al{w}\in\N^N$, let $$\langle \al{u},\al{w} \rangle = \sum_{i\in Q_0} u_{i}w_{i} - \sum_{a\in Q_1}u_{t(a)}w_{h(a)}$$ denote the \emph{Euler form} associated to the quiver $Q$. If $\Phi'\subset \Phi^+$ is any subset of positive roots, a partition $\Phi' = \curly I_1\union \cdots \union \curly I_\ell$ is called \emph{directed} if for every $1\leq j\leq \ell$, one has \begin{itemize} \item $\langle \alpha,\beta \rangle \geq 0$ for all $\alpha,\beta\in \curly I_j$, and \item $\langle \alpha,\beta \rangle\geq 0 \geq \langle \beta,\alpha\rangle$ whenever $i<j$ and $\alpha\in \curly I_i$, $\beta\in\curly I_j$.\end{itemize}
For Dynkin quivers a directed partition always exists \cite{Re}.

Now choose $m=(m_\varphi)_{\varphi\in\Phi^+}$, a vector of non-negative integers corresponding to the quiver cycle $\Omega_m$. Let $\Phi'\subset \Phi^+$ be a subset containing $\{\varphi \;|\; m_\varphi\neq 0\}$, and let $\Phi'=\curly I_1\union\cdots\union \curly I_\ell$ be a directed partition. For each $1\leq j \leq \ell$, compute the vector $$\sum_{\varphi\in\curly I_j}m_\varphi\varphi = (p_1^{(j)},\ldots,p_N^{(j)}) \in \N^N.$$ From this data, construct the sequence $\al{i}_j=(i_1,\ldots,i_{n})$, to be any list of the vertices $i\in Q_0$ for which $p_i^{(j)}\neq 0$, with no vertices repeated, and ordered so that for every $a\in Q_1$ the vertex $t(a)$ comes before $h(a)$. From this information, set $\al{r}_j = (p_{i_1}^{(j)},\ldots,p_{i_n}^{(j)})$. Finally, let $\al{i}$ and $\al{r}$ be the concatenated sequences $\al{i}=\al{i}_1\cdots\al{i}_\ell$ and $\al{r}=\al{r}_1\cdots\al{r}_\ell$. A pair of sequences $(\al{i},\al{r})$ constructed in this way is called a \emph{resolution pair} for $\Omega_m$.

\begin{prop}[Reineke]\label{prop:Re}
Let $Q$ be a Dynkin quiver, $\Omega_m$ a quiver cycle, and $(\al{i},\al{r})$ a resolution pair for $\Omega_m$. Then in the notation above, the natural map $\rho_{\al{i}}^{\al{r}}: X_{\al{i},\al{r}}\to X$ is a resolution of $\Omega_m(\curly E_\bullet)$; i.e.\ it has image $\Omega_m(\curly E_\bullet)$ and is a birational isomorphism onto this image.\qed
\end{prop}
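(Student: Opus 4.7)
The plan is to verify the three properties that make $\rho_{\al{i}}^{\al{r}}$ a resolution: smoothness of the source $X_{\al{i},\al{r}}$, equality of the image with $\Omega_m(\curly E_\bullet)$, and birationality onto this image. All three assertions can be checked locally and are compatible with base change obtained by pulling back a generic section; accordingly, I would first reduce to the universal setting $X = V$ with the tautological $Q$-bundle, where $\Omega_m(\curly E_\bullet) = \Omega_m \subset V$, and then test the claims on fibers.

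Smoothness proceeds by induction on the length $p$ of $\al{i}$. At each inductive step, $X_{i_k,r_k}$ is cut out inside the Grassmann bundle $\Gr{s}{\tilde{\curly E}_{i_k}}$ as the zero locus of a section of $\Hom(\tilde{\curly M}_{i_k}, \tilde{\curly Q})$; the genericity of Remark \ref{rem:genericity}, propagated through the tower, makes this section transverse, so each zero locus is smooth of the expected codimension. For the image identification, I would unwind the tower: a point of $X_{i,r}$ over $x$ is precisely a choice of subbundle $\curly S \subset (\tilde{\curly E}_i)_x$ of codimension $r$ containing the image of every incoming map $(f_a)_x$, which is exactly the data of a subrepresentation of $(\curly E_\bullet,f_\bullet)_x$ obtained by shrinking the vertex $i$ component. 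Concatenating the full sequence assembles into a chain of subrepresentations
\[
(\curly E_\bullet, f_\bullet)_x = W_0 \supset W_1 \supset \cdots \supset W_\ell = 0
\]
in which the successive quotient $W_{j-1}/W_j$ has dimension vector $(p_1^{(j)},\ldots,p_N^{(j)}) = \sum_{\varphi \in \curly I_j} m_\varphi \varphi$. The intra-block condition $\langle \alpha,\beta\rangle \geq 0$ for $\alpha,\beta \in \curly I_j$, combined with Gabriel's theorem, forces each such quotient to be isomorphic to $\bigoplus_{\varphi \in \curly I_j} V_\varphi^{\oplus m_\varphi}$, so the total representation has isomorphism type indexed by $m$ and the image is contained in $\Omega_m$; conversely, every element of the open $\al{G}$-orbit admits such a chain, so the image equals $\Omega_m$.

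The main obstacle is birationality: over a generic point of the dense orbit of $\Omega_m(\curly E_\bullet)$, the chain $W_\bullet$ must be canonically recoverable from $(\curly E_\bullet,f_\bullet)_x$. This is where the cross-block condition $\langle \alpha,\beta\rangle \geq 0 \geq \langle\beta,\alpha\rangle$ for $\alpha \in \curly I_i$, $\beta \in \curly I_j$ with $i < j$ is essential. Using the Euler form identity $\dim\Hom - \dim\Ext^1 = \langle -,-\rangle$ for Dynkin (hereditary) path algebras, this sign condition enforces a Hom/Ext directionality among the indecomposable summands, which in turn makes $W_j$ the unique maximal subrepresentation whose composition factors are the $V_\varphi$ with $\varphi \in \curly I_{j+1}\cup\cdots\cup\curly I_\ell$. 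Once this canonicity is established, the fibers of $\rho_{\al{i}}^{\al{r}}$ over the dense orbit are singletons and the birational isomorphism follows. The delicate representation-theoretic bookkeeping behind this uniqueness---verifying the appropriate Hom vanishings between blocks of the directed partition---is the technical core of Reineke's original argument, which one would invoke to complete the proof.
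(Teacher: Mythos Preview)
The paper does not supply its own proof of this proposition: it is stated with a terminal \qed and attributed to Reineke, with the surrounding discussion pointing to \cite{Re} and to the adaptation in \cite{BuchQCDT}. Your sketch is therefore not competing with an argument in the paper but rather outlining what one finds in the cited source. That outline is accurate: the fiberwise reduction to the representation variety, the identification of points of the tower with chains of subrepresentations whose successive quotients have the dimension vectors $\sum_{\varphi\in\curly I_j} m_\varphi\varphi$ prescribed by the directed partition, and the recognition that birationality hinges on the uniqueness of this filtration---forced by the Hom/Ext directionality encoded in the sign conditions $\langle\alpha,\beta\rangle \geq 0 \geq \langle\beta,\alpha\rangle$---are exactly the ingredients of Reineke's argument. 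Your closing sentence, deferring the technical bookkeeping to the original reference, is in effect the same move the paper makes at the outset.
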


The important consequence of Reineke's theorem is the following corollary.

\begin{cor}\label{cor:Re}
With $\rho_{\al{i}}^{\al{r}}$ as above, $(\rho_{\al{i}}^{\al{r}})_{*}(1) = [\curly O_{\Omega_m(\curly E_\bullet)}] \in K(X)$.\qed
\end{cor}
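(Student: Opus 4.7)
The plan is to combine Proposition~\ref{prop:Re} with the standard homological consequence of rational singularities: that a resolution pushes the structure sheaf of a smooth source onto the structure sheaf of the target while all higher direct images vanish. Once this is in place, $(\rho_{\al{i}}^{\al{r}})_*(1) = [\curly O_{\Omega_m(\curly E_\bullet)}]$ drops out of the definition of the K-theoretic pushforward.

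First I would factor $\rho := \rho_{\al{i}}^{\al{r}}$ through its scheme-theoretic image,
\[
X_{\al{i},\al{r}} \xrightarrow{\tilde\rho} \Omega_m(\curly E_\bullet) \hookrightarrow X,
\]
where $\tilde\rho$ is the birational morphism furnished by Proposition~\ref{prop:Re}. The source $X_{\al{i},\al{r}}$ is smooth, since it is constructed as an iterated zero-locus inside a tower of Grassmann bundles, and each stage remains smooth under the genericity hypothesis of Remark~\ref{rem:genericity}; likewise $\tilde\rho$ is proper, being a composition of Grassmann bundle projections and closed embeddings.

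Next I would transfer the rational singularities property from the orbit closure $\Omega_m \subset V$ to the degeneracy locus $\Omega_m(\curly E_\bullet) \subset X$. By our standing hypothesis $\Omega_m$ is Cohen--Macaulay with rational singularities, and for a generic $Q$-bundle the classifying section can be taken transverse to the $\al{G}$-orbit stratification of $V$, so $\Omega_m(\curly E_\bullet)$ is locally isomorphic (up to a smooth factor) to $\Omega_m$ and therefore inherits these properties. With this in hand, Kempf's criterion yields $\tilde\rho_* \curly O_{X_{\al{i},\al{r}}} = \curly O_{\Omega_m(\curly E_\bullet)}$ and $R^i\tilde\rho_*\curly O_{X_{\al{i},\al{r}}} = 0$ for all $i > 0$. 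Unwinding the definition of pushforward in K-theory,
\[
\tilde\rho_*(1) = \sum_{i \geq 0} (-1)^i [R^i\tilde\rho_* \curly O_{X_{\al{i},\al{r}}}] = [\curly O_{\Omega_m(\curly E_\bullet)}],
\]
and composing with the closed embedding $\iota\colon \Omega_m(\curly E_\bullet) \hookrightarrow X$ (using functoriality $\rho_* = \iota_* \circ \tilde\rho_*$) delivers the asserted identity in $K(X)$.

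The only step that is not purely formal is the verification that $\Omega_m(\curly E_\bullet)$ has rational singularities, and this is the main obstacle; everything else is automatic once Reineke's resolution is on hand and one invokes the standard K-theoretic translation of $R\tilde\rho_*\curly O_{X_{\al{i},\al{r}}} = \curly O_{\Omega_m(\curly E_\bullet)}$.
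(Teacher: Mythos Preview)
Your argument is correct and is exactly the standard reasoning the paper leaves implicit: the corollary is stated with an immediate \qed\ and no proof, since it is meant to follow directly from Proposition~\ref{prop:Re} together with the standing hypothesis that $\Omega_m$ has rational singularities. You have simply unpacked that implication---factoring $\rho_{\al{i}}^{\al{r}}$ through its image, invoking rational singularities to kill the higher direct images, and reading off the $K$-theoretic pushforward---which is precisely what the author expects the reader to supply.
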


In the above statement $1\in K(X_{\al{i},\al{r}})$ is the class $[\curly O_{X_{\al{i},\al{r}}}]$. As we will see in Section \ref{s:PMT}, this provides an inductive recipe to give a formula for our desired $K$-class, which has been used previously by Buch e.g.\ in \cite{BuchQCDT}. However, our method of computing push-forward maps by iterated residues, which we explain in Sections \ref{s:IR} and \ref{s:ELIR}, is essentially different, and this technology produces formulas in a more compact form. For an analogous approach to this problem in the cohomological setting see \cite{qr3}.

\section[IR operations]{Iterated residue operations}\label{s:IR}

Let $f(x)$ be a rational function in the variable $x$ with coefficients in some commutative ring $R$ which has a formal Laurent series expansion in $R[[x^{\pm 1}]]$. Define the operation
\begin{equation}
\IR_{x=0,\infty}(f(x)\,dx) = \IR_{x=0}(f(x)\,dx)+\IR_{x=\infty}(f(x)\,dx),
\end{equation}
where $\IR_{x=0}(f(x)\,dx)$ is the usual residue operation from elementary complex analysis (i.e.\ take the coefficient of $x^{-1}$ in the corresponding Laurent series about $x=0$), and furthermore one recalls that $\IR_{x= \infty}(f(x)\,dx) = \IR_{x=0}(d f(\frac{1}{x}))$. The idea of using the operation $\IR_{x=0,\infty}$ in $K$-theory is due to Rim\'{a}nyi and Szenes \cite{kthom}.

More generally, let $\al{z}=\{z_1,\ldots,z_n\}$ be an alphabet of ordered commuting indeterminants and $F(\al{z})$ a rational function in these variables with coefficients in $R$ having a formal multivariate Laurent series expansion in $R[[z_1^{\pm 1},\ldots,z_n^{\pm 1}]]$. Then one defines $$\IR_{\al{z}=0,\infty}(F(\al{z})\,d\al{z}) = \IR_{z_n=0,\infty}\cdots\IR_{z_1=0,\infty}(F(\al{z})\,dz_1\cdots dz_n).$$

\begin{example}\label{ex:res1} Consider the function $g(a) = \frac{1}{(1-a/b)a}$, and the residue operation
$\IR_{a=0,\infty}(g(a)\,da)$. Using the convention that $a<<b$ (which we use throughout the sequel), we obtain that $$\IR_{a=0}(g(a)\,da) = \IR_{a=0}\left(\frac{1}{a}\left(1+\frac{a}{b}+\frac{a^2}{b^2}+\cdots\right)\,da\right) = 1.$$ On the other hand, $$-\frac{1}{a^2}g(1/a) = b\left(\frac{1}{1-ab}\right)$$ and so $\IR_{a=\infty}(g(a)\,da) = 0$. Thus $\IR_{a=0,\infty}(g(a)\,da) = 1$. However, it is more convenient to do the calculation by using the fact that for any meromorphic differential form the sum of all residues (including the point at infinity) is zero. Since the only other pole of $g$ occurs at $a=b$, we see easily that $$\IR_{a=0,\infty}(g(a)\,da) = -\IR_{a=b}\left( \frac{da}{(1-a/b)a}\right) = 1. $$
\end{example}

\begin{example}\label{ex:res2} Consider the meromorphic differential form $$F(z_1,z_2) = \frac{(1-\frac{\beta_1}{z_2})(1-\frac{\beta_2}{z_2})(1-\frac{z_2}{z_1})}{(1-\frac{z_1}{\alpha_1})(1-\frac{z_2}{\alpha_1})(1-\frac{z_1}{\alpha_2})(1-\frac{z_2}{\alpha_2})z_1 z_2}dz_1 dz_2.$$ Functions of this type will occur often in our analysis, where the result of the operation $\Res_{\al{z}=0,\infty}(F)$ is a certain (Laurent) polynomial in the variables $\alpha_i$ and $\beta_j$, separately symmetric in each. We begin by factoring $F=F_1 F_2$, where
\begin{align*} 
F_1 & = \frac{(1-\frac{z_2}{z_1})}{(1-\frac{z_1}{\alpha_1})(1-\frac{z_1}{\alpha_2})z_1}dz_1  & \text{and} &
 &F_2 & = \frac{(1-\frac{\beta_1}{z_2})(1-\frac{\beta_2}{z_2})}{(1-\frac{z_2}{\alpha_1})(1-\frac{z_2}{\alpha_2})z_2 }dz_2.
\end{align*}
We first use the residue theorem as in the previous example to write that $$\IR_{z_1=0,\infty}(F) =-\left(\IR_{z_1=\alpha_1}(F)+\IR_{z_1=\alpha_2}(F)\right),$$ and we compute that 
\begin{align*}
-\IR_{z_1=\alpha_1}(F) & =-F_2 \left(\IR_{z_1=\alpha_1}(F_1) \right)= F_2 \left(\frac{(1-\frac{z_2}{\alpha_1})}{(1-\frac{\alpha_1}{\alpha_2})}\right) = F'\\
-\IR_{z_1=\alpha_2}(F) & =-F_2 \left(\IR_{z_1=\alpha_2}(F_1) \right) = F_2 \left(\frac{(1-\frac{z_2}{\alpha_2})}{(1-\frac{\alpha_2}{\alpha_1})}\right) = F''
\end{align*}
It is not difficult to see that $\IR_{z_2=\alpha_1}(F')=\IR_{z_2=\alpha_2}(F'')=0$, so it remains only to compute
\begin{align*}
\IR_{\al{z}=0,\infty}(F)
&= -\IR_{z_2=\alpha_2}(F')-\IR_{z_2=\alpha_1}(F'')\\
&= \frac{(1-\frac{\beta_1}{\alpha_2})(1-\frac{\beta_2}{\alpha_2})}{(1-\frac{\alpha_1}{\alpha_2})} + \frac{(1-\frac{\beta_1}{\alpha_1})(1-\frac{\beta_2}{\alpha_1})}{(1-\frac{\alpha_2}{\alpha_1})}
= 1-\frac{\beta_1\beta_2}{\alpha_1\alpha_2}.
\end{align*}
The last line above bears resemblance to a Berline-Vergne-Atiyah-Bott type formula for equivariant localization, adapted for $K$-theory. This is not accidental, a connection which we explain in Section \ref{s:ELIR}.
\end{example}

\section[Main theorem]{The main theorem}\label{s:MT}

Choose an element $m=(m_\varphi) \in \N^{\Phi^+}$ corresponding to the $\al{G}$-orbit closure $\Omega_m \subset V$, having only rational singularities. Let $\al{i}=(i_1,\ldots,i_p)$ and $\al{r}=(r_1,\ldots,r_p)$ be a resolution pair for $\Omega_m$. Let $(\curly E_{\bullet},f_\bullet) \to X$ be a generic $Q$-bundle over the smooth complex projective base variety $X$. For each $k \in \{1,\ldots,p\}$ define alphabets of ordered commuting variables
$$\al{z}_k =\{z_{k1},\ldots,z_{kr_k}\}$$
and the \emph{discriminant} factors
$$\Delta(\al{z}_k) = \prod_{1\leq i < j \leq r_k} \left( 1 - \frac{z_{kj}}{z_{ki}} \right).$$
For each $i\in Q_0$, recall the definition of the set $T(i)$ from Equation (\ref{eqn:T(i)}), and define the alphabets of commuting variables
\begin{align*}
\mathbb E_i &= \{\epsilon_{i1},\ldots,\epsilon_{iv_i}\}, & \mathbb M_i & = \Union_{j\in T(i)} \mathbb E_j
\end{align*}
where the degree $d$ elementary symmetric function $e_d(\mathbb E_i) = e_d(\epsilon_{i1},\ldots,\epsilon_{iv_i})= [\wedge^d (\curly E_i)] \in K(X)$. Consequently, we conclude that $e_d(\epsilon_{i1}^{-1},\ldots,\epsilon_{iv_i}^{-1}) = [\wedge^d (\curly E_i\dual)]$. Henceforth, we will call such a set of formal commuting variables \emph{Grothendieck roots} of $\curly E_i$. Finally, for each $k\in\{1,\ldots,p\}$ define 
\begin{itemize}\item the \emph{residue factors}
\begin{equation}
R_k = \prod_{y\in\al{z_k}}\frac{\prod_{x\in \mathbb{M}_k}(1-x y)}{\prod_{x\in \mathbb{E}_k}(1- x y)} \nonumber
\end{equation}
\item the \emph{interference factors}
\begin{equation}
I_k = \prod_{y \in \al{z_k}} \frac{\displaystyle \mathop{\prod_{\ell<k:i_\ell=i_k}}_{x\in \al{z_\ell}}\left( 1- \frac{y}{x}\right)}{\displaystyle \mathop{\prod_{\ell<k:i_\ell\in T(i_k)}}_{x\in \al{z_\ell}}\left( 1- \frac{y}{x}\right)}\nonumber
\end{equation}
\item and the \emph{differential factors}
\begin{equation}
D_k = \Delta(\al{z}_k) \cdot \dlog(\al{z}_k) = \Delta(\al{z}_k)\prod_{i=1}^{r_k}\frac{dz_{ki}}{z_{ki}}.\nonumber
\end{equation}
\end{itemize}

\begin{thm}\label{thm:main}
With the notations above, the class $[\curly O_{\Omega_m(\curly E_\bullet)}] \in K(X)$ is given by the iterated residue
\begin{equation}\label{eqn:main}
\IR_{\al{z}_1=0,\infty}\cdots\IR_{\al{z}_p=0,\infty}
\left(
\prod_{k=1}^p R_k I_k D_k \right).
\end{equation}
\end{thm}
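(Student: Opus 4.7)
My plan is to compute the push-forward $(\rho_{\al{i}}^{\al{r}})_*(1)$, which equals the desired class by Corollary~\ref{cor:Re}, by factoring it as a composition of elementary push-forwards, one for each Grassmannization step, and translating each into an iterated residue in a fresh alphabet $\al{z}_k$. Since $\rho_{\al{i}}^{\al{r}} = \rho_{i_1}^{r_1}\circ\cdots\circ\rho_{i_p}^{r_p}$, the innermost push-forward $(\rho_{i_p}^{r_p})_*$ is performed first; this matches the ordering in the theorem, in which $\al{z}_p$ is the innermost (first applied) alphabet.

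At step $k$ the map $\rho_{i_k}^{r_k}$ factors as the closed embedding of the zero scheme $X_{i_k,r_k} = Z(\tilde{\curly M}_{i_k}\to \curly Q_k)$ into the Grassmann bundle $\Gr{s}{\tilde{\curly E}_{i_k}}$, followed by the Grassmann projection. For the first, a Koszul resolution of the structure sheaf yields $[\curly O_{X_{i_k,r_k}}] = \prod_{m,q}(1-m/q)$, where $m$ ranges over Grothendieck roots of $\tilde{\curly M}_{i_k}$ and $q$ over roots of the tautological quotient $\curly Q_k$. For the second, I will invoke the $K$-theoretic equivariant localization formula, translated into an iterated residue in the alphabet $\al{z}_k$ via the mechanism developed in Section~\ref{s:ELIR}, under the substitution $z_{k,j} = q_{k,j}^{-1}$. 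Combining, the elementary contribution to the integrand at step $k$ is
$$\prod_{y\in\al{z}_k}\frac{\prod_{m\in\tilde{\mathbb M}_{i_k}}(1-my)}{\prod_{\epsilon\in\tilde{\mathbb E}_{i_k}}(1-\epsilon y)}\cdot D_k,$$
so the remaining task is to identify the leading factor with $R_k\cdot I_k$.

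This identification is a formal manipulation reflecting that the bundles $\tilde{\curly E}_{i_k}$ and $\tilde{\curly M}_{i_k}$ at step $k$ have been modified from the originals by all earlier Grassmannization steps: in $K$-theory,
$$[\tilde{\curly E}_{i_k}] = [\curly E_{i_k}] - \sum_{\ell<k,\, i_\ell=i_k}[\curly Q_\ell], \qquad [\tilde{\curly M}_{i_k}] = [\curly M_{i_k}] - \sum_{\ell<k,\, i_\ell\in T(i_k)}[\curly Q_\ell].$$
Under $z_{\ell,b}=q_{\ell,b}^{-1}$ these become
\begin{align*}
\prod_{\epsilon\in\tilde{\mathbb E}_{i_k}}(1-\epsilon y) &= \frac{\prod_{x\in \mathbb E_{i_k}}(1-xy)}{\prod_{\ell<k,\, i_\ell=i_k}\prod_{x\in \al{z}_\ell}(1-y/x)}, \\
\prod_{m\in\tilde{\mathbb M}_{i_k}}(1-my) &= \frac{\prod_{x\in\mathbb M_{i_k}}(1-xy)}{\prod_{\ell<k,\, i_\ell\in T(i_k)}\prod_{x\in\al{z}_\ell}(1-y/x)}.
\end{align*}
Substituting into the elementary integrand, the unmodified parts assemble into $R_k$, while the extra $(1-y/x)$ factors arrange themselves into exactly $I_k$ as defined in Section~\ref{s:MT}.

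The main technical obstacle I anticipate is justifying the composition of the iterated residues rigorously. Each application of $\IR_{\al{z}_k=0,\infty}$ produces a rational expression still depending on the earlier alphabets $\al{z}_1,\ldots,\al{z}_{k-1}$, and one must verify that the Laurent expansion conventions implicit in the ordering $\al{z}_1\prec \cdots \prec \al{z}_p$ correctly recover the output of the next one-step push-forward at each stage. In particular, one must check that the residue-at-$0,\infty$ convention is not spoiled by poles on the diagonals $z_{k,j}=z_{\ell,b}$ produced by $I_k$; the zeros of the numerator of $I_k$ on precisely the $i_\ell=i_k$ diagonals should neutralize the potential clashes, and an induction from $k=p$ inward to $k=1$ should then complete the argument.
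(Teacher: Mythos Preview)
Your proposal is correct and follows essentially the same route as the paper: factor $\rho_{\al{i}}^{\al{r}}$ into elementary steps $\rho_{i_k}^{r_k}=\pi_k\circ\iota_k$, compute $(\iota_k)_*$ via the Koszul/Giambelli--Thom--Porteous formula (this is the paper's Lemma~\ref{lem:iota}), compute $(\pi_k)_*$ via Corollary~\ref{cor:IRpfQonly} with $z_{k,j}=q_{k,j}^{-1}$, and then use the $K$-theory relations $[\tilde{\curly E}_{i_k}]=[\curly E_{i_k}]-\sum_{\ell<k,\,i_\ell=i_k}[\curly Q_\ell]$ (the paper phrases this as repeated applications of the identity $(1-\curly F_\bullet\mathbb T)=(1-(\curly S\curly F)_\bullet\mathbb T)(1-(\curly Q\curly F)_\bullet\mathbb T)$) to split the integrand into $R_k I_k D_k$. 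Your closing concern about justifying the nesting of residues is legitimate but is treated just as informally in the paper, which simply asserts that subsequent steps are ``completely analogous.''
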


\begin{example}\label{ex:inA3MT}
Consider the ``inbound $A_3$" quiver $\{1\rightarrow 2 \leftarrow 3\}$. Let $\varphi_1$, $\varphi_2$, and $\varphi_3$ be the corresponding simple roots so that the positive roots of the underlying root system can be represented by $\varphi_{ij} = \sum_{i\leq \ell \leq j} \varphi_\ell$ for $1\leq i \leq j \leq 3$. Consider now the orbit closure $\Omega_m \subset V=\Hom(E_1,E_2)\dirsum\Hom(E_3,E_2)$  corresponding to $m_{11}=m_{23}=0$, but all other $m_{ij}=1$ so that the resulting dimension vector is $\al{v}=(2,3,2)$. Set $\Phi'=\{\varphi_{12},\varphi_{13},\varphi_{22},\varphi_{33}\}$ and choose the directed partition $$\Phi'=\{\varphi_{22}\} \union \{\varphi_{12},\varphi_{13}\} \union \{\varphi_{33}\}$$ with corresponding resolution pair $\al{i} = (2,1,3,2,3)$ and $\al{r}=(1,2,1,2,1)$. Let $\curly E_\bullet\to X$ be a generic $Q$-bundle. Set \begin{align*} \mathbb E_1 &=\{\alpha_1,\alpha_2\}, &\mathbb E_2 &= \{\beta_1,\beta_2,\beta_3\}, & \mathbb E_3 & = \{\gamma_1,\gamma_2\} \end{align*} to be the Grothendieck roots of $\curly E_1$, $\curly E_2$, and $\curly E_3$ respectively. In particular, this means that $\mathbb M_1 = \mathbb M_3 = \{\}$ while $\mathbb M_2 = \{\alpha_1,\alpha_2,\gamma_1,\gamma_2\}$. Following the recipe of the theorem and equation (\ref{eqn:main}) we form the alphabets $\al{z_k}$ for $1\leq k \leq 5$ which we rename as
\begin{align*}
\al{z}_1 &=\{v\} & \al{z}_2 &=\{w_1,w_2\} & \al{z}_3 &=\{x\} & \al{z}_4 &=\{y_1,y_2\} & \al{z}_5 &=\{z\}
\end{align*}
and construct the differential form
\begin{equation}\label{eqn:inA3form}
\frac{\displaystyle
	\prod_{s\in \mathbb M_2,t\in \al{z}_1\union\al{z}_4} \left(1 - s t\right)
	} 
     {\displaystyle
     \mathop{\prod_{s\in \mathbb E_2}}_{t\in \al{z}_1\union\al{z}_4}(1- s t)
     \mathop{\prod_{s\in \mathbb E_1}}_{t\in \al{z}_2}(1-s t)
     \mathop{\prod_{s\in \mathbb E_3}}_{t\in \al{z}_3\union\al{z}_5}(1- s t)}
     \frac{\displaystyle \left(1-\frac{z}{x}\right)\prod_{i=1}^2\left(1-\frac{y_i}{v}\right)}
     	{\displaystyle \mathop{\prod_{s\in \al{z}_4}}_{t\in \al{z}_2\union\al{z}_3} \left(1-\frac{s}{t}\right)}
	\prod_{k=1}^5 D_k
\end{equation}
and a calculation in \emph{Mathematica} shows that the result of applying the iterated residue operation $\IR_{\al{z}_1=0,\infty}\cdots\IR_{\al{z}_5=0,\infty}$ to the form above gives
\begin{equation}\label{eqn:kclass}
\begin{array}{rl}
[\curly O_{\Omega_m(\curly E_\bullet)}] = & 
1
-\frac{\alpha _1 \alpha _2 \gamma _1^2 \gamma _2^2}{\beta _1^2 \beta _2^2 \beta _3^2}
+\frac{\alpha _1 \alpha _2 \gamma _1 \gamma _2}{\beta _1 \beta _2 \beta _3^2}+\frac{\alpha _1 \alpha _2 \gamma _1 \gamma _2}{\beta _1 \beta _2^2 \beta _3}+\frac{\alpha _1 \alpha _2 \gamma _1 \gamma _2}{\beta _1^2 \beta _2 \beta _3}
\\ 
& -\frac{\gamma _1 \gamma _2}{\beta _1 \beta _2}-\frac{\gamma _1 \gamma _2}{\beta _1 \beta _3}-\frac{\gamma _1 \gamma _2}{\beta _2 \beta _3}
-\frac{\alpha _1 \alpha _2 \gamma _1}{\beta _1 \beta _2 \beta _3}-\frac{\alpha _1 \alpha _2 \gamma _2}{\beta _1 \beta _2 \beta _3}+\frac{\gamma _1^2 \gamma _2}{\beta _1 \beta _2 \beta _3}+\frac{\gamma _1 \gamma _2^2}{\beta _1 \beta _2 \beta _3}
.
\end{array}
\end{equation}
Following Buch's combinatorial description of the inbound $A_3$ case (cf.\ \cite[Section~7.1]{BuchQCDT}) one obtains in terms of double stable Grothendieck polynomials that \begin{equation}\label{eqn:Gclass} [\curly O_{\Omega(\curly E_\bullet)}] = G_{21}(\curly E_2 - \curly M_2)+G_{2}(\curly E_2-\curly M_2)G_{1}(\curly E_1) -G_{21}(\curly E_2-\curly M_2)G_{1}(\curly E_1)\end{equation} which one can check agrees with equation (\ref{eqn:kclass}) once expanded (n.b.\ in the expression above the subscript ``$21$" is the partition whose Young diagram has two rows, the first with two boxes, the second with one box). The leading term above (see \cite[Corollary~4.5]{BuchQCDT}) is given by $s_{21}(\curly E_2-\curly M_2) + s_{2}(\curly E_2-\curly M_2)s_{1}(\curly E_1)$ which agrees with the result of \cite[Section~6.2]{qr3}.

We wish also to compare our result directly to the cohomological iterated residue formula of Rim\'{a}nyi, see \cite{qr3}. From the $K$-class $[\curly O_{\Omega_m(\curly E_\bullet)}]$ one obtains the cohomology class $[\Omega_m(\curly E_\bullet)]$ by the following method, which we explain in general.

Let $\curly E_1,\ldots,\curly E_n$ be vector bundles over $X$ with ranks $e_1,\ldots,e_n$ respectively, and $\mathbb E_1=\{\epsilon_{11},\ldots,\epsilon_{1e_1}\} , \ldots , \mathbb E_n = \{\epsilon_{n1},\ldots,\epsilon_{ne_n}\}$ respective sets of Grothendieck roots. If $f(\epsilon_{ij})$ is a Laurent polynomial, separately symmetric in each set of variables $\mathbb E_i$, then $f$ represents a well-defined element in $K(X)$, and for such a class replace each $\epsilon_{ij}$ with the exponential $\exp({\epsilon_{ij}\xi})$. Then a class in $H^{*}(X)$ is given by taking the lowest degree nonzero term in the Taylor expansion (with respect to $\xi$ about zero) of $f(\exp({\epsilon_{ij}\xi}))$ where, once in the cohomological setting, the variables $\epsilon_{ij}$ are interpreted as \emph{Chern roots} of the corresponding bundles. In particular, applying this process to the class $[\curly O_{\Omega(\curly E_\bullet)}]$ yields the class $[\Omega(\curly E_\bullet)] \in H^{*}(X)$. This is actually the leading term of the \emph{Chern character} $K(X)\to H^{*}(X)$. For more details, see \cite[Section~4]{BuchQCDT}.

Applying the algorithm above to the Laurent polynomial (\ref{eqn:kclass}) gives that the corresponding class in $H^{*}(X)$ must be
\begin{eqnarray*}
[\Omega_m(\curly E_\bullet)] &=&   2 \beta_1 \beta_2 \beta_3+\beta_1^2 \beta_2+\beta_1 \beta_2^2+\beta_1^2 \beta_3+\beta_2^2 \beta_3+\beta_1 \beta_3^2+\beta_2 \beta_3^2  \\ 
& &-\alpha_1 \beta_1 \beta_2 -\alpha_2 \beta_1 \beta_2-\alpha_1 \beta_2 \beta_3-\alpha_2 \beta_2 \beta_3-\alpha_1 \beta_1 \beta_3-\alpha_2 \beta_1 \beta_3  \\ 
& &-2\left( \beta_1 \beta_2 \gamma_1 + \beta_1 \beta_2 \gamma_2 + \beta_2 \beta_3 \gamma_1 + \beta_2 \beta_3 \gamma_2 + \beta_1 \beta_3 \gamma_1 + \beta_1 \beta_3 \gamma_2 \right)  \\ 
& &-\beta_1^2 \gamma_1-\beta_1^2 \gamma_2-\beta_2^2 \gamma_1-\beta_2^2 \gamma_2 -\beta_3^2 \gamma_1-\beta_3^2 \gamma_2 \\
& &+\beta_1 \gamma_1^2+\beta_2 \gamma_1^2+\beta_3 \gamma_1^2+\beta_1 \gamma_2^2+\beta_2 \gamma_2^2+\beta_3 \gamma_2^2  \\ 
& &+2 \left( \beta_1 \gamma_1 \gamma_2 + \beta_2 \gamma_1 \gamma_2 + \beta_3 \gamma_1 \gamma_2 \right) -\gamma_1^2 \gamma_2-\gamma_1 \gamma_2^2  \\
& &-\alpha_1 \gamma_1^2-\alpha_2 \gamma_1^2-\alpha_1 \gamma_2^2-\alpha_2 \gamma_2^2-\alpha_1 \gamma_1 \gamma_2-\alpha_2 \gamma_1 \gamma_2  \\ 
& &+\alpha_1 \beta_1 \gamma_1+\alpha_2 \beta_1 \gamma_1+\alpha_1 \beta_2 \gamma_1+\alpha_2 \beta_2 \gamma_1+\alpha_1 \beta_3 \gamma_1+\alpha_2 \beta_3 \gamma_1  \\
& &+\alpha_1 \beta_1 \gamma_2+\alpha_2 \beta_1 \gamma_2+\alpha_1 \beta_2 \gamma_2+\alpha_2 \beta_2 \gamma_2+\alpha_1 \beta_3 \gamma_2+\alpha_2 \beta_3 \gamma_2 
\end{eqnarray*}
where the variables $\{\alpha_i\}$, $\{\beta_i\}$, and $\{\gamma_i\}$ are now interpreted as the Chern roots of $\curly E_1$, $\curly E_2$, and $\curly E_3$ respectively. If one sets $A_i = c_i(\curly E_1)$, $B_i = c_i(\curly E_2)$, and $C_i = c_i(\curly E_3)$ to be the corresponding Chern classes the expression above becomes
\begin{equation}\label{eqn:KtoHclass}
[\Omega_m] =
\left(B_1 - A_1\right) \left(B_2+C_1^2\right)-C_1\left(B_1^2+C_2\right) +A_1 \left(B_1 C_1+C_2\right) -B_3.
\end{equation}
In \cite[Equation~(9)]{qr3}, this class is computed to be
\begin{equation}\label{eqn:RRclass} -c_3(\curly M_2\dual-\curly E_2\dual) + c_2(\curly M_2\dual-\curly E_2\dual)c_1(\curly M_2\dual-\curly E_2\dual) + c_2(\curly M_2\dual-\curly E_2\dual)c_1(-\curly E_1\dual)
\end{equation}
where the \emph{relative Chern classes} $c_n(\curly V\dual-\curly W\dual)$ are defined by the formal expression 
$$\sum_{n\geq 0} c_n(\curly V\dual-\curly W\dual)\xi^n =
\frac{\sum_{k\geq 0}c_k(\curly V)(-\xi)^k}{\sum_{\ell\geq0}c_\ell(\curly W)(-\xi)^\ell}$$
for bundles $\curly V$ and $\curly W$ with respective Chern classes $c_k(\curly V)$ and $c_\ell(\curly W)$. Using the Chern classes $A_i$, $B_i$, and $C_i$ as above, one substitutes into the expression (\ref{eqn:RRclass}) to obtain 
\begin{align*}
[\Omega_m]=&-\left[(B_1^3+B_3-2B_1B_2)-(B_1^2-B_2)(A_1+C_1) \right. \\ 
&\left.\hphantom{(B_1^3+B_3-2B_1B_2)-(B_1^2)}+B_1(A_2+A_1C_1+C_2)-(A_1C_2+A_2C_2)\right]\\
&+[(B_1^2-B_2)-B_1(A_1+C_1)+(A_2+A_1C_1+C_2)][B_1-(A_1+C_1)] \\
&+[(B_1^2-B_2)-B_1(A_1+C_1)+(A_2+A_1C_1+C_2)]A_1
\end{align*}
and a little high-school algebra shows that this is identical to (\ref{eqn:KtoHclass}).
\end{example}

\begin{remark}
The leading term of the class (\ref{eqn:Gclass}) is, according to Buch, called $s_{21}(\curly E_2-\curly M_2) + s_{2}(\curly E_2-\curly M_2)s_{1}(\curly E_1)$. In \cite{qr3}, the same Schur functions are instead evaluated on $\curly M_i\dual - \curly E_i\dual$, but both authors' notations are interpreted to mean $$s_\lambda = \det(h_{\lambda_i+j-i})$$ where the $h_\ell$ are the appropriate relative Chern classes defined above.
\end{remark}

\section[Localization]{Equivariant localization and iterated residues}\label{s:ELIR}

Let $X$ be a smooth complex projective variety and $\curly A \to X$ a vector bundle of rank $n$. Choose an integer $1\leq k \leq n$ and set $q=n-k$. The integers $n$, $k$, and $q$ will be fixed throughout the section. Form the Grassmannization of $\curly A$ over $X$, $\pi:\Gr{k}{\curly A}\to X$, with tautological exact sequence of vector bundles $\curly S\to \curly A \to \curly Q$ over $\Gr{k}{\curly A}$. By convention, we suppress the notation of pullback bundles. The following diagram is useful to keep in mind:
\begin{equation}
\begin{diagram}[2em]
\curly A	&	&\curly S	&\rTo	&\curly A			&\rTo	&\curly Q	\\
\dTo		&	&			&\rdTo	&\dTo				&\ldTo	&			\\
X			&	&\lTo_{\pi}	&		&\Gr{k}{\curly A}	&		&
\end{diagram}\nonumber
\end{equation}
Let $\{\sigma_1,\ldots,\sigma_k\}$ and $\{\omega_1,\ldots,\omega_q\}$ be sets of Grothendieck roots for $\curly S$ and $\curly Q$ respectively. Set $R=K(X)$ and let $f$ be a Laurent polynomial in $R[\sigma_i^{\pm 1};\omega_j^{\pm 1}]$ separately symmetric in the $\sigma$ and $\omega$ variables, (where $1\leq i\leq k$ and $1\leq j \leq q$). The symmetry of  $f$ implies that it represents a $K$-class in $K(\Gr{k}{\curly A})$. The purpose of this section is to give an explanation of the push-forward map $\pi_{*}:K(\Gr{k}{\curly A}) \to K(X)$ applied to $f$.

Many formulas for $\pi_{*}$ exist in the literature. For example, Buch has given a formula in terms of stable Grothendieck polynomials and the combinatorics of integer sequences in \cite[Theorem~7.3]{BuchGCQV}. We will utilize the method of equivariant localization.  The following formula is well-known to experts, deeply embedded in the folklore of the subject and, as such, a single (or original) reference is unknown to the author. Following the advice of \cite{FSM}, we refer the reader to various sources, namely \cite{EqKT} and \cite{CG}.

\begin{prop}\label{prop:locpf}
Let $\{\alpha_1,\ldots,\alpha_n\}$ be Grothendieck roots for $\curly A$ and set $[n]=\{1,\ldots,n\}$. Let $[n,k]$ denote the set of all $k$-element subsets of $[n]$, and for any subset $J=\{j_1,\ldots,j_r\}\subset [n]$, let $\alpha_J$ denote the collection of variables $\{\alpha_{j_1},\ldots,\alpha_{j_r}\}$. With the notation above $\pi_{*}$ acts by
\begin{equation}
f(\sigma_1,\ldots,\sigma_k ; \omega_1,\ldots,\omega_q) \mapsto \sum_{I\in [n,k]} \frac{f(\alpha_I ; \alpha_{\bar{I}})}{\prod_{i\in I,j\in \bar{I}} (1-\frac{\alpha_i}{\alpha_j} )}\nonumber
\end{equation}
where $\bar{I}$ denotes the complement $[n]\setminus I$.
\end{prop}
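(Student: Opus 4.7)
The plan is to reduce the statement to the case where $\curly A$ splits as a direct sum of line bundles, and then apply the $K$-theoretic version of the Atiyah-Bott-Berline-Vergne localization formula (due to Thomason; see e.g.\ \cite{CG}, \cite{EqKT}). Both sides of the proposed identity are universal polynomial expressions in the Grothendieck roots $\alpha_1,\ldots,\alpha_n$ of $\curly A$, and both are symmetric in those variables: the left-hand side because the Grothendieck roots are only defined up to permutation, and the right-hand side because of the sum over all $k$-subsets $I\in[n,k]$. By the splitting principle, it therefore suffices to prove the formula after pulling back to the full flag bundle of $\curly A$, where without loss of generality $\curly A = L_1\oplus\cdots\oplus L_n$ with $\alpha_i=[L_i]\in K(X)$.

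Under this splitting, the fiberwise torus $T=(\C^{*})^{n}$ acts on $\pi:\Gr{k}{\curly A}\to X$ with fixed locus $\bigsqcup_{I\in[n,k]} F_I$, where each component $F_I\cong X$ corresponds to the coordinate subbundle $\Dirsum_{i\in I} L_i \subset \curly A$. Along $F_I$ the tautological sequence restricts to $\curly S|_{F_I}=\Dirsum_{i\in I} L_i$ and $\curly Q|_{F_I}=\Dirsum_{j\in\bar I} L_j$, so after identifying Grothendieck roots one has $f|_{F_I}=f(\alpha_I;\alpha_{\bar I})$. The normal bundle to $F_I$ is the restriction of the vertical tangent bundle,
$$N_I = \Hom(\curly S,\curly Q)|_{F_I} = \Dirsum_{i\in I,\, j\in\bar I} L_i\dual \otimes L_j,$$
whose $K$-theoretic Euler class is
$$\lambda_{-1}(N_I\dual) = \prod_{i\in I,\, j\in\bar I}\left(1-\frac{\alpha_i}{\alpha_j}\right).$$

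Applying the $K$-theoretic localization theorem and using that $\pi$ restricts to an isomorphism on each $F_I$ yields the claim term by term:
$$\pi_{*}(f) = \sum_{I\in[n,k]} \frac{f(\alpha_I;\alpha_{\bar I})}{\prod_{i\in I,\, j\in\bar I}(1-\alpha_i/\alpha_j)}.$$
The main technical point is bookkeeping the duality conventions in the $K$-theoretic Euler class and recognizing that the localization identity, which a priori lives in the localized equivariant $K$-ring of $\Gr{k}{\curly A}$, actually descends to an honest identity in $K(X)$: the separate symmetry of $f$ in the $\sigma$ and $\omega$ variables is precisely what guarantees that the right-hand side, despite its fractional appearance, represents a genuine class in $K(X)$. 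Once these conventions are pinned down, the rest is a direct computation, so the main obstacle is organizational rather than conceptual.
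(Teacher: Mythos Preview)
The paper does not actually prove this proposition. It is stated as a well-known folklore result, with the author writing that ``the following formula is well-known to experts, deeply embedded in the folklore of the subject'' and referring the reader to \cite{EqKT} and \cite{CG} for the method of equivariant localization. Your proposal is therefore not competing against a proof in the paper but rather supplying the argument that the paper only gestures at through its references.

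Your argument is the standard one and is correct: reduce via the splitting principle to a direct sum of line bundles, identify the torus-fixed loci $F_I$ indexed by $I\in[n,k]$, compute the normal bundle $N_I=\Hom(\curly S,\curly Q)|_{F_I}=\bigoplus_{i\in I,\,j\in\bar I}L_i\dual\otimes L_j$, and apply the $K$-theoretic localization formula with $\lambda_{-1}(N_I\dual)=\prod_{i\in I,\,j\in\bar I}(1-\alpha_i/\alpha_j)$. This is exactly the content the cited references \cite{CG}, \cite{EqKT} would provide, so your write-up is entirely in the spirit of what the paper intends.
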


\begin{example}\label{ex:locpf}
Suppose that $\curly A$ and $\curly B$ are both vector bundles of rank $2$ and let $\{\alpha_1,\alpha_2\}$ be as above. Let $\{\beta_1,\beta_2\}$ be Grothendieck roots of $\curly B$. Form the Grassmannization $\Gr{1}{\curly A} = \PP(\curly A)$ and consider the class $$f(\sigma,\omega) = \left(1-\frac{\beta_1}{\omega}\right) \left(1-\frac{\beta_2}{\omega}\right) \in K(\PP(\curly A)).$$ The expert will recognize this expression as the $K$-class associated to the structure sheaf of the subvariety in $\PP(\curly A)$ defined by the vanishing of a generic section $\mathbb{P}(\curly A) \to \Hom(\curly B,\curly Q)$. In any event, applying Proposition \ref{prop:locpf} gives that
$$\pi_{*}(f(\sigma,\omega)) = \frac{(1-\frac{\beta_1}{\alpha_2})(1-\frac{\beta_2}{\alpha_2})}{(1-\frac{\alpha_1}{\alpha_2})} + \frac{(1-\frac{\beta_1}{\alpha_1})(1-\frac{\beta_2}{\alpha_1})}{(1-\frac{\alpha_2}{\alpha_1})}$$
an expression which we concluded was equal to $1-\frac{\beta_1\beta_2}{\alpha_1\alpha_2}$ in Example \ref{ex:res2}. In comparison to Buch's formula (cf. \cite[Theorem~7.3]{BuchGCQV}) we have set $f=G_2(\curly Q-\curly B)$ and obtained that $\pi_{*}(f) = G_1(\curly A-\curly B)$.
\end{example}

Observe that in general, the expression obtained from applying Proposition \ref{prop:locpf} has many terms (the binomial coefficient ${n \choose k}$ to be precise) and by this measure is quite complicated. Hence we seek to encode the expression in a more compact form, and this is accomplished by the following proposition, which is just a clever rewriting of the localization formula, pointed out to the author by Rim\'{a}nyi in correspondence with Szenes.

\begin{prop}\label{prop:IRpf}
Let $\al{z}=\{z_1,\ldots,z_n\}$ be an alphabet of ordered, commuting variables. If $f$ has no poles in $R=K(X)$ (aside from zero and the point at infinity), then in the setting of Proposition \ref{prop:locpf} one has that $\pi_{*}$ acts by
\begin{equation}
f(\sigma_1,\ldots,\sigma_k;\omega_1,\ldots,\omega_q) \mapsto \IR_{\al{z}=0,\infty}\left( f(\al{z}) \frac{\prod_{1\leq i < j\leq n} \left( 1 - \frac{z_j}{z_i}\right) }{ \prod_{i,j=1}^{n} \left( 1-\frac{z_i}{\alpha_j} \right) } \dlog\al{z} \right)\nonumber
\end{equation}
where $\dlog\al{z} = \prod_{i=1}^n d\log(z_i) = \prod_{i=1}^n \frac{dz_i}{z_i}$.
\end{prop}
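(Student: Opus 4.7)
The plan is to verify that the iterated residue formula evaluates to the same sum that Proposition \ref{prop:locpf} produces. Since $f$ has no poles in $R$ other than at the points $z_i=0,\infty$, in each variable $z_i$ the integrand has only simple finite (nonzero) poles at $z_i=\alpha_1,\ldots,\alpha_n$. Applying the global residue theorem on $\mathbb{P}^1$ (exactly as in Example \ref{ex:res1}) converts each $\IR_{z_i=0,\infty}$ into the negative sum of residues at the $\alpha_\ell$'s. Doing this one variable at a time for $z_1,z_2,\ldots,z_n$ produces a sum indexed by sequences $(j_1,\ldots,j_n)\in[n]^n$.

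Next I would observe that the Vandermonde-type numerator $\prod_{1\leq i<j\leq n}(1-z_j/z_i)$ forces this sum to range only over permutations. Indeed, after the substitutions $z_m=\alpha_{j_m}$ for $m<\ell$, the factor $(1-z_\ell/z_m)$ becomes $(1-z_\ell/\alpha_{j_m})$, which vanishes at $z_\ell=\alpha_{j_m}$; hence only fresh choices $j_\ell\notin\{j_1,\ldots,j_{\ell-1}\}$ contribute, and the surviving sum is over $\sigma=(j_1,\ldots,j_n)\in S_n$. A local calculation at each simple pole gives $\operatorname{Res}_{z_\ell=\alpha_{j_\ell}}\bigl(dz_\ell/(z_\ell\prod_j(1-z_\ell/\alpha_j))\bigr)=-1/\prod_{j\neq j_\ell}(1-\alpha_{j_\ell}/\alpha_j)$; the outer minus signs from the residue theorem exactly cancel these local signs. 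Crucially, as $\sigma$ varies, the accumulated denominator $\prod_i\prod_{j\neq j_i}(1-\alpha_{j_i}/\alpha_j)$ is permutation-independent, always equal to $D_0:=\prod_{a\neq b\in[n]}(1-\alpha_a/\alpha_b)$. The upshot is
$$
\IR_{\al{z}=0,\infty}\!\left(f(\al{z})\frac{\prod_{i<j}(1-z_j/z_i)}{\prod_{i,j}(1-z_i/\alpha_j)}\,\dlog\al{z}\right)
=\frac{1}{D_0}\sum_{\sigma\in S_n}f(\alpha_\sigma)\prod_{i<j}\left(1-\frac{\alpha_{\sigma(j)}}{\alpha_{\sigma(i)}}\right).
$$

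To finish, I would group the sum over $S_n$ by the unordered $k$-subset $I=\{j_1,\ldots,j_k\}$. Because $f$ is separately symmetric in the $\sigma$- and $\omega$-slots, $f(\alpha_\sigma)=f(\alpha_I;\alpha_{\bar I})$ depends only on $I$. The discriminant splits into three blocks according to whether the pair $i<j$ satisfies $j\leq k$, $i>k$, or $i\leq k<j$. The crossing block evaluates to $\prod_{a\in I,\,b\in\bar I}(1-\alpha_b/\alpha_a)$, which is constant on each $(I,\bar I)$-class, while the within-$I$ and within-$\bar I$ blocks sum over orderings according to the classical identity
$$
\sum_{\tau\in S(J)}\prod_{i<j}\left(1-\frac{\alpha_{\tau(j)}}{\alpha_{\tau(i)}}\right)=\prod_{a\neq b\in J}\left(1-\frac{\alpha_a}{\alpha_b}\right),
$$
which follows from rewriting each summand as $\operatorname{sgn}(\tau)\,V/\prod_i \alpha_{\tau(i)}^{|J|-i}$ and recognizing the result as a Vandermonde determinant. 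Plugging these into the displayed formula, the factors $\prod_{a\neq b\in I}$ and $\prod_{a\neq b\in\bar I}$ cancel against the corresponding portions of $D_0$, leaving the coefficient of $f(\alpha_I;\alpha_{\bar I})$ equal to $1/\prod_{i\in I,\,j\in\bar I}(1-\alpha_i/\alpha_j)$, in exact agreement with Proposition \ref{prop:locpf}.

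The main obstacle I anticipate is careful sign bookkeeping: the cascade of signs from the residue theorem, from each local pole computation, and from the signed Vandermonde sum must all conspire so that no stray $\operatorname{sgn}(\sigma)$ survives. The discriminant factor $\prod_{i<j}(1-z_j/z_i)$ has a built-in ordering of the $z_i$ that interacts with permutation signs precisely so as to cancel them; once this check is in hand, the argument reduces cleanly to the elementary Vandermonde identity above.
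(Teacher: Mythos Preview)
Your proposal is correct and follows exactly the approach the paper indicates: the paper's proof consists of the single sentence ``the proof is a formal application of the fact that the sum of the residues at all poles (including infinity) vanishes,'' with details left to the reader and a reference to the cohomological analogue in \cite{Z}. You have supplied those details---converting each $\IR_{z_i=0,\infty}$ to minus the sum over the $\alpha$-poles, using the discriminant numerator to restrict to permutations, and then collapsing the permutation sum via the Vandermonde identity to recover the localization formula of Proposition~\ref{prop:locpf}---and your sign bookkeeping is in order.
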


\begin{proof}
The proof is a formal application of the fact that the sum of the residues at all poles (including infinity) vanishes. We give an example of this phenomenon below, and the general proof is completely analogous, only requiring more notation and paper. We leave the details to the reader, but for a similar proof in the case of equivariant localization and proper push-forward in cohomology see \cite{Z}.
\end{proof}

If the class represented by $f$ depends only on the variables $\sigma_i$, then the expression above can be dramatically simplified--namely one needs to utilize only the variables $z_i$ for $1\leq i \leq k$.

\begin{cor}\label{cor:IRpfSonly}
If $f=f(\sigma_1,\ldots,\sigma_k)$ depends only on the Grothendieck roots of $\curly S$, then set $\al{z} = \{z_1,\ldots,z_k\}$ and $\pi_{*}$ acts by
\begin{equation}
f(\sigma_1,\ldots,\sigma_k) \mapsto \IR_{\al{z}=0,\infty} \left( f(\al{z}) \frac{  \prod_{1\leq i < j\leq k} \left( 1 - \frac{z_j}{z_i}\right) }{ \displaystyle \prod_{1\leq i\leq k,1\leq j \leq n} \left( 1-\frac{z_i}{\alpha_j} \right) } \dlog\al{z} \right)\nonumber
\end{equation}
\end{cor}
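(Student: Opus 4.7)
The plan is to evaluate the iterated residue in the claimed formula directly and compare it to the localization sum of Proposition \ref{prop:locpf}. Working variable by variable in the prescribed order $z_1, z_2, \ldots, z_k$, I would apply the residue theorem on $\mathbb P^1$ (the sum of all residues, including the one at infinity, vanishes) to convert each $\IR_{z_m=0,\infty}$ into the negative of the sum of residues at its remaining finite poles.

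The first step is to observe that for each $m$ the only finite nonzero poles of the integrand in $z_m$ occur at $z_m = \alpha_j$. Indeed, $f$ is regular in $R = K(X)$ by hypothesis, and the discriminant factors $(1 - z_j/z_i)$ contribute only zeros, together with poles at $z_m = 0$ that are already captured by $\IR_{z_m=0,\infty}$. After substituting $z_m \mapsto \alpha_{j_m}$ at each step, the numerator factor $(1 - z_m/z_i)|_{z_i = \alpha_{j_i}} = (1 - z_m/\alpha_{j_i})$ for $i < m$ vanishes at $z_m = \alpha_{j_i}$, cancelling the denominator pole at that location; consequently only ordered tuples $(j_1, \ldots, j_k) \in [n]^k$ of pairwise distinct indices contribute.

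A bookkeeping of the surviving factors (using the separate symmetry of $f$ to replace $f(\alpha_{j_1}, \ldots, \alpha_{j_k})$ by $f(\alpha_I)$ with $I = \{j_1, \ldots, j_k\}$) shows that the accumulated discriminant numerator $\prod_{1 \leq i < m \leq k}(1 - \alpha_{j_m}/\alpha_{j_i})$ cancels precisely one half of the ``within-$I$'' denominator $\prod_m \prod_{i \neq m}(1 - \alpha_{j_m}/\alpha_{j_i})$, leaving the contribution of each ordered tuple equal to
\[
\frac{f(\alpha_I)}{\displaystyle \prod_{1 \leq m < i \leq k}\bigl(1 - \alpha_{j_m}/\alpha_{j_i}\bigr) \cdot \prod_{i \in I,\, j \in \bar I}\bigl(1 - \alpha_i/\alpha_j\bigr)}.
\]

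To finish, I would sum over orderings of each fixed $I \in [n,k]$ and invoke the Vandermonde symmetrization identity
\[
\sum_{\sigma \in S_k}\, \prod_{1 \leq i < j \leq k} \frac{x_{\sigma(j)}}{x_{\sigma(j)} - x_{\sigma(i)}} = \frac{\det\bigl(x_i^{j-1}\bigr)_{i,j=1}^k}{\prod_{1 \leq i < j \leq k}(x_j - x_i)} = 1,
\]
whose numerator is exactly the Vandermonde determinant. Specializing with $x_i = \alpha_{j_i}$ collapses the sum over orderings to $1$, so the total equals $\sum_{I \in [n,k]} f(\alpha_I) \big/ \prod_{i \in I, j \in \bar I}(1 - \alpha_i/\alpha_j)$, i.e., the localization formula of Proposition \ref{prop:locpf}. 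The main subtlety is the careful tracking of cancellations between discriminant numerator factors and denominator pole factors at each iterative step; once organized, the Vandermonde symmetrization that assembles the final sum is classical.
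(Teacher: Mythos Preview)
Your argument is correct, but it is a genuinely different route from the paper's. The paper does not evaluate the $k$-variable residue directly; instead it starts from the already-established $n$-variable formula of Proposition~\ref{prop:IRpf} and shows that when $f$ is independent of the $\omega$'s, the extra $q=n-k$ residue variables can be removed one at a time: after substituting $z_1,\ldots,z_k$ at finite poles, the discriminant factors cancel the corresponding denominator factors so that each remaining integrand in $z_{k+1},\ldots,z_n$ is of the form treated in Example~\ref{ex:res1}, whose $\IR_{z=0,\infty}$ is identically~$1$. Thus the paper's proof is a reduction to Proposition~\ref{prop:IRpf}, not a fresh comparison with Proposition~\ref{prop:locpf}.

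Your approach bypasses Proposition~\ref{prop:IRpf} entirely: you evaluate the $k$-variable residue at the finite poles $z_m=\alpha_{j_m}$, track the cancellations that force the $j_m$ to be distinct, and then sum over orderings of each $k$-subset $I$ via the Vandermonde symmetrization $\sum_{\sigma\in S_k}\prod_{i<j}\bigl(1-x_{\sigma(i)}/x_{\sigma(j)}\bigr)^{-1}=1$. This is essentially a direct proof of (a special case of) Proposition~\ref{prop:IRpf} itself, which the paper only sketches. The paper's route is shorter once Proposition~\ref{prop:IRpf} is in hand and makes transparent \emph{why} the superfluous variables disappear; your route is self-contained and makes the connection to the localization sum explicit, at the cost of the extra symmetrization step.
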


\begin{proof}
We will prove the result in the case $n=2$ and $s=q=1$; the general case is analogous. Let $f(\sigma)$ represent a class in $K(\Gr{s}{\curly A})$. Proposition \ref{prop:IRpf} implies that $\pi_{*}(f)$ is
$$\IR_{\al{z}=0,\infty} \left( f(z_1) \frac{\left(1-\frac{z_2}{z_1}\right)\,\dlog\al{z}}{\prod_{i,j=1}^2 \left(1- \frac{z_i}{\alpha_j} \right)}\right).$$
Taking the ``finite" residues of $z_1=\alpha_1$ and $z_1=\alpha_2$, we obtain that the above is equal to
$$\IR_{z_2=0,\infty} \left(
\frac{f(\alpha_1)\cancel{\left(1-\frac{z_2}{\alpha_1}\right)}\,dz_2}{\cancel{\left(1-\frac{z_2}{\alpha_1}\right)}\left(1-\frac{\alpha_1}{\alpha_2}\right)\left(1-\frac{z_2}{\alpha_2}\right)z_2}
+
\frac{f(\alpha_2)\cancel{\left(1-\frac{z_2}{\alpha_2}\right)}\,dz_2}{\cancel{\left(1-\frac{z_2}{\alpha_2}\right)}\left(1-\frac{\alpha_2}{\alpha_1}\right)\left(1-\frac{z_2}{\alpha_1}\right)z_2}
\right).$$
In both terms of the expression above, the only part which depends on $z_2$ has the form $\frac{1}{(1-z_2/\alpha_i)z_2}$ and Example \ref{ex:res1} implies that residues of this type always evaluate to $1$. Observe then, that the expression above is equivalent to what we would have obtained by removing all the factors involving $z_2$ at the beginning.
\end{proof}

One can obtain a similar expression for classes depending only on the variables $\omega_j$ which requires only $n-k=q$ residue variables.

\begin{cor}\label{cor:IRpfQonly}
If $f=f(\omega_1,\ldots,\omega_q)$ depends only on the Grothendieck roots of $\curly Q$, then set $\al{z} = \{z_1,\ldots,z_q\}$ and $\pi_{*}$ acts by
\begin{equation}
f(\omega_1,\ldots,\omega_q) \mapsto \IR_{\al{z}=0,\infty} \left( f(z_1^{-1},\ldots,z_q^{-1}) \frac{  \prod_{1\leq i < j\leq k} \left( 1 - \frac{z_j}{z_i}\right) }{ \displaystyle \prod_{1\leq i\leq q,1\leq j \leq n} \left( 1-\alpha_j z_i \right) } \dlog\al{z} \right)\nonumber
\end{equation}
\end{cor}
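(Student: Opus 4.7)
The plan is to deduce Corollary \ref{cor:IRpfQonly} from Corollary \ref{cor:IRpfSonly} via the canonical isomorphism of Grassmannians $\Gr{k}{\curly A} \cong \Gr{q}{\curly A^\vee}$ that sends a rank-$k$ subbundle $\curly S \subset \curly A$ to its rank-$q$ annihilator $\curly S^\perp \subset \curly A^\vee$. Dualizing the tautological exact sequence $\curly S \to \curly A \to \curly Q$ shows that under this identification the tautological subbundle on $\Gr{q}{\curly A^\vee}$ is $\curly Q^\vee$ and the tautological quotient is $\curly S^\vee$; the two Grassmannians have the same underlying variety and the same projection to $X$, so their $K$-theoretic pushforward maps $\pi_*$ to $K(X)$ coincide.

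First I would record the dictionary for Grothendieck roots under dualization: if $\{\alpha_j\}$ are Grothendieck roots of $\curly A$ and $\{\omega_i\}$ of $\curly Q$, then $\{\alpha_j^{-1}\}$ and $\{\omega_i^{-1}\}$ are Grothendieck roots of $\curly A^\vee$ and $\curly Q^\vee$ respectively. Let $\sigma'_i$ denote Grothendieck roots of the tautological subbundle of $\Gr{q}{\curly A^\vee}$, so that $\sigma'_i$ plays the role of $\omega_i^{-1}$. The class on $\Gr{k}{\curly A}$ represented by $f(\omega_1,\ldots,\omega_q)$ is then represented on $\Gr{q}{\curly A^\vee}$ by the Laurent polynomial $\tilde f(\sigma'_1,\ldots,\sigma'_q) := f((\sigma'_1)^{-1},\ldots,(\sigma'_q)^{-1})$, which depends only on the Grothendieck roots of the tautological subbundle of $\Gr{q}{\curly A^\vee}$. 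Applying Corollary \ref{cor:IRpfSonly} to $\tilde f$, with subbundle rank equal to $q$, ambient rank equal to $n$, and ambient Grothendieck roots $\alpha'_j := \alpha_j^{-1}$, gives
$$\pi_*(\tilde f) = \IR_{\al{z}=0,\infty}\left(\tilde f(z_1,\ldots,z_q)\frac{\prod_{1 \leq i < j \leq q}(1 - z_j/z_i)}{\prod_{i=1}^{q}\prod_{j=1}^{n}(1 - z_i/\alpha'_j)}\dlog\al{z}\right).$$
Substituting $\tilde f(z_1,\ldots,z_q) = f(z_1^{-1},\ldots,z_q^{-1})$ and $1 - z_i/\alpha'_j = 1 - \alpha_j z_i$ then yields precisely the formula in the statement.

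The main technical step is verifying compatibility of the tautological data and pushforward under the duality $\Gr{k}{\curly A} \cong \Gr{q}{\curly A^\vee}$, but this is standard and appears implicitly throughout the literature on Grassmannian duality. The principal bookkeeping is the inversion of Grothendieck roots under dualization, which must be tracked carefully so that $\tilde f$ genuinely represents the same $K$-class as $f$. As an alternative direct approach, one could mimic the proof of Corollary \ref{cor:IRpfSonly} by starting from the $n$-variable form of Proposition \ref{prop:IRpf} with $f = f(\omega) = f(z_{k+1},\ldots,z_n)$ and integrating out the first $k$ variables via the cancellation between Vandermonde factors $(1 - z_j/z_i)|_{z_i = \alpha_\ell}$ and denominator factors $(1 - z_j/\alpha_\ell)$. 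The main obstacle in that direction is the considerably more intricate combinatorial bookkeeping, together with a final substitution $z_i \mapsto z_i^{-1}$ on the remaining $q$ variables needed to produce the stated form; all of this is bypassed by the duality argument.
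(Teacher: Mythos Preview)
Your proof is correct and follows essentially the same approach as the paper: both use the duality isomorphism $\Gr{k}{\curly A}\cong \Gr{q}{\curly A^\vee}$, identify the tautological subbundle on the dual side with $\curly Q^\vee$, invoke the fact that Grothendieck roots of a dual bundle are the inverses, and then apply Corollary~\ref{cor:IRpfSonly}. Your write-up is in fact more detailed than the paper's two-sentence proof, and you correctly obtain $\prod_{1\le i<j\le q}(1-z_j/z_i)$ in the numerator (the ``$k$'' appearing in the displayed statement is a typo for $q$).
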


\begin{proof}
We use the fact that $\Gr{s}{\curly A}$ is homeomorphic to the Grassmannian fibration $\Gr{q}{\curly A\dual}$, over which lies the tautological exact sequence $\curly Q\dual \to \curly A\dual \to \curly S\dual$. We are now in a situation to apply the previous corollary, once we recognize that for any bundle $\curly B$, if $\{\beta_i\}_{1\leq i\leq \rk{\curly B}}$ is a set of Grothendieck roots, then the corresponding Grothendieck roots of $\curly B\dual$ are supplied by $\{\beta_i^{-1}\}_{1\leq i \leq \rk{\curly B}}$.
\end{proof}

\section[Main theorem: proof]{Proof of the main theorem}\label{s:PMT}

In this section we prove Theorem \ref{thm:main} and will use the notation of Section \ref{s:MT} except where otherwise specified. We will need the language and notation of Reineke's construction, which is detailed in Section \ref{s:RS}. We introduce also the following notation. If $\mathbb A = \{a_1,\ldots,a_n\}$ and $\mathbb B = \{b_1,\ldots,b_m\}$ then we write
\begin{align*}
	\left(1-\frac{\mathbb A}{\mathbb B}\right) 
&= \mathop{\prod_{1\leq i \leq n}}_{1\leq j \leq m}\left(1-\frac{a_i}{b_j}\right)
&\left(1-{\mathbb A}{\mathbb B}\right) 
&= \mathop{\prod_{1\leq i \leq n}}_{1\leq j \leq m}\left(1-{a_i}{b_j}\right).
\intertext{In the special case that $\mathbb A$ and $\mathbb B$ are respective sets of Grothendieck roots of vector bundles $\curly A$ and $\curly B$, we will write $\curly A_\bullet = \mathbb A$ and $\curly B_\bullet = \mathbb B$ above. We can also mix these notations and write e.g.}
\left(1-\frac{\curly A_\bullet}{\mathbb B}\right) 
&= \mathop{\prod_{1\leq i \leq n}}_{1\leq j \leq m}\left(1-\frac{a_i}{b_j}\right)
&\left(1-\frac{\mathbb A}{\curly B_\bullet}\right) 
&= \mathop{\prod_{1\leq i \leq n}}_{1\leq j \leq m}\left(1-\frac{a_i}{b_j}\right)\\
\left(1-{\curly A_\bullet}{\mathbb B}\right) 
&= \mathop{\prod_{1\leq i \leq n}}_{1\leq j \leq m}\left(1-{a_i}{b_j}\right)
&\left(1-{\mathbb A}{\curly B_\bullet}\right) 
&= \mathop{\prod_{1\leq i \leq n}}_{1\leq j \leq m}\left(1-{a_i}{b_j}\right)
\end{align*}
if only $\mathbb A$ corresponds to a set of Grothendieck roots and $\mathbb B$ represents a set of some other formal variables (as on the left) or vice versa (as on the right). This is not to be confused with the notation $\curly E_\bullet \to X$ used to denote a $Q$-bundle. The context should always make clear the intended meaning of the ``bullet" symbol as a subscript to calligraphic letters.

We will prove Theorem \ref{thm:main} by iteratively understanding the sequence of maps $\rho_{i_k}^{r_k}$ in the Reineke resolution, which break up into a natural inclusion followed by a natural projection from a Grassmannization (cf.\ Section \ref{s:RS}). Our first step is the following lemma, which provides a formula for the natural inclusion.

\begin{lem}\label{lem:iota}
Let $X$ be a smooth base variety and $\curly M\to \curly E$ a map of vector bundles over $X$. Let $0\leq s\leq \rk(\curly E)$ and form the Grassmannization $\pi:\Gr{s}{\curly E}\to X$ with tautological exact sequence $\curly S\to \curly E \to \curly Q$. Set $Z=Z(\curly M\to \curly Q)\subset \Gr{s}{\curly E}$ and let $\iota:Z\includes\Gr{s}{\curly E}$ denote the natural inclusion. If $f\in K(Z)$ is a class expressed entirely in terms of bundles pulled back from $\Gr{s}{\curly E}$ then $\iota_{*}:K(Z)\to K(\Gr{s}{\curly E})$ acts on $f$ by
$$
f\mapsto f\cdot \left(1-\frac{\curly M_\bullet}{\curly Q_\bullet}\right).
$$
\end{lem}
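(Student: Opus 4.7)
The plan is to identify $Z$ as the zero locus of a regular section of a specific vector bundle over $\Gr{s}{\curly E}$, then invoke the Koszul resolution to compute $\iota_*(1)$, and finally conclude via the projection formula.

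First, I would observe that the condition ``$\curly M \to \curly Q$'' gives a section of the vector bundle $\Hom(\curly M,\curly Q) \cong \curly M\dual \otimes \curly Q$ over $\Gr{s}{\curly E}$, and $Z$ is precisely its zero scheme. Under the genericity assumption from Remark \ref{rem:genericity} (which guarantees that $Z$ has the expected codimension $\rk(\curly M)\cdot\rk(\curly Q)$), this section is regular, so the Koszul complex
\begin{equation}
0 \to \wedge^{\mathrm{top}}(\curly M\otimes \curly Q\dual) \to \cdots \to \wedge^2(\curly M\otimes \curly Q\dual) \to \curly M\otimes \curly Q\dual \to \curly O_{\Gr{s}{\curly E}} \to \iota_{*}\curly O_Z \to 0 \nonumber
\end{equation}
is exact. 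Consequently, in $K(\Gr{s}{\curly E})$ we obtain the identity
\begin{equation}
\iota_{*}(1) = [\iota_{*}\curly O_Z] = \sum_{i\geq 0}(-1)^i\bigl[\wedge^i(\curly M\otimes \curly Q\dual)\bigr] = \lambda_{-1}(\curly M\otimes \curly Q\dual).\nonumber
\end{equation}

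Next, I would translate $\lambda_{-1}(\curly M\otimes \curly Q\dual)$ into the Grothendieck-root notation. If $\{m_j\}$ are Grothendieck roots of $\curly M$ and $\{q_k\}$ are Grothendieck roots of $\curly Q$, then $\curly M\otimes \curly Q\dual$ has Grothendieck roots $\{m_j/q_k\}$, and the total Chern class of the form $\lambda_{-1}$ becomes
\begin{equation}
\lambda_{-1}(\curly M\otimes \curly Q\dual) = \prod_{j,k}\left(1 - \frac{m_j}{q_k}\right) = \left(1 - \frac{\curly M_\bullet}{\curly Q_\bullet}\right).\nonumber
\end{equation}

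Finally, to address a general class $f\in K(Z)$ expressed purely in terms of bundles pulled back from $\Gr{s}{\curly E}$, I would let $\tilde f \in K(\Gr{s}{\curly E})$ denote the class obtained by evaluating the same expression for $f$ using the original bundles on $\Gr{s}{\curly E}$ (so that $\iota^{*}\tilde f = f$), and apply the projection formula:
\begin{equation}
\iota_{*}(f) = \iota_{*}(\iota^{*}\tilde f \cdot 1) = \tilde f \cdot \iota_{*}(1) = f\cdot\left(1 - \frac{\curly M_\bullet}{\curly Q_\bullet}\right),\nonumber
\end{equation}
where in the last equality we identify $\tilde f$ with $f$ as Laurent expressions in the Grothendieck roots of the pulled-back bundles. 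The main subtle point is verifying regularity of the section so that the Koszul resolution applies cleanly; once the genericity hypothesis is invoked, the remainder is a direct bookkeeping in terms of roots.
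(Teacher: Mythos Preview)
Your argument is correct and arrives at the same formula, but the route differs from the paper's. The paper reduces to $\iota_*(1)$ via the projection formula just as you do, but then identifies $\iota_*(1)$ with the double stable Grothendieck polynomial $G_R(\curly Q-\curly M)$ for the rectangular partition $R=(m)^r$ by quoting Buch's $K$-theoretic Giambelli--Thom--Porteous theorem, and finally evaluates that polynomial explicitly (using \cite[Equation~(7.1)]{BuchKLR}) to obtain the product $\prod_{i,j}(1-\mu_j/\omega_i)$. Your approach instead bypasses the Grothendieck-polynomial machinery entirely: you recognise $Z$ as the zero scheme of a regular section of $\curly M\dual\otimes\curly Q$ and read off $\iota_*(1)=\lambda_{-1}(\curly M\otimes\curly Q\dual)$ directly from the Koszul resolution. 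This is more elementary and self-contained---indeed the Koszul computation is essentially what underlies the cited Porteous formula in this maximal-rank-drop case---whereas the paper's phrasing has the advantage of situating the lemma within the same circle of Grothendieck-polynomial identities used elsewhere. One small remark: your appeal to Remark~\ref{rem:genericity} for regularity is contextually fine since the lemma is only applied to generic $Q$-bundles, but strictly speaking the lemma as stated is for an arbitrary map $\curly M\to\curly E$, so the expected-codimension hypothesis is implicit in both proofs.
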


\begin{proof}
Set $r=\rk(\curly Q) = \rk(\curly E)-s$ and $m=\rk(\curly M)$. Because of our assumption on $f$ we know that $\iota_{*}(f) = \iota_{*}(\iota^{*}(f))$, and therefore the adjunction formula implies that $\iota_{*}(f) = f\cdot \iota_{*}(1)$. The image of $\iota_{*}(1)$ is exactly the class $[\curly O_{Z(\curly M\to\curly Q)}] \in K(\Gr{s}{\curly E})$ which is given by the $K$-theoretic Giambelli-Thom-Porteous theorem \cite[Theorem~2.3]{BuchGCQV}. Explicitly, $$\iota_{*}(1) = G_R(\curly Q - \curly M)$$ where $G_R$ denotes the double stable Grothendieck polynomial associated to the rectangular partition $R=(m)^{r}$, i.e.\ the partition whose Young diagram has $r$ rows each containing $m$ boxes. The result of evaluating $G_R$ on the bundles in question is given, e.g.\ by \cite[Equation~(7.1)]{BuchKLR} $$G_R(\curly Q - \curly M) = G_R(x_1,\ldots,x_{r};y_1,\ldots,y_{m}) = \mathop{\prod_{1\leq i \leq r}}_{1\leq j \leq m}(x_i+y_j-x_iy_j)$$ with the specializations $x_i=1-\omega_i^{-1}$ and $y_j=1-\mu_j$, where $\curly Q_\bullet = \{\omega_i\}_{i=1}^{r}$ and $\curly M_\bullet=\{\mu_j\}_{j=1}^{m}$ denote the respective Grothendieck roots. The result of this substitution is exactly the statement of the lemma.
\end{proof}

For the Dynkin quiver $Q$, smooth complex projective variety $X$, and quiver cycle $\Omega$, let $\curly E_\bullet\to X$ be a generic $Q$-bundle and $\al{i}=(i_1,\ldots,i_p)$, $\al{r}=(r_1,\ldots,r_p)$ be a resolution pair for $\Omega$. We will show that at each step in the Reineke resolution, the result can be written as an iterated residue entirely in terms of residue variables (i.e.\ the alphabets $\al{z}_k$) and Grothendieck roots only of the bundles $\curly E_i$ or the tautological quotient bundles constructed at previous steps. Moreover, the form of this result is arranged in such a way to evidently produce the formula of the main theorem.

By Corollary \ref{cor:Re} we must begin with the image of $(\rho_{i_p}^{r_p})_{*}(1)$. Set $i=i_p\in Q_0$ and $\curly A=\curly E_i$. Write $T(i) = \{t_1,\ldots,t_\ell\}\subset Q_0$ and denote $\curly E_{t_j}=\curly B_j$. Recall that whenever $j\in Q_0$ appears in the Reineke resolution sequence $\al{i}$, it is subsequently replaced with a tautological subbundle. For any bundle $\curly F$ and Grassmannization $\Gr{s}{\curly F}$, we will denote the tautological subbundle by $\curly S\curly F$. If this is done multiple times, we let $\curly S^n \curly F$ denote the tautological subbundle over $\Gr{s'}{\curly S^{n-1} \curly F}$. Similarly, we denote the tautological quotient over $\Gr{s}{\curly F}$ by $\curly Q \curly F$.

Suppose that the vertex $i\in Q_0$ appears $n$ times in $\al{i}$ and moreover that each tail vertex $t_j$ appears $n_j$ times. Set
\begin{align*}
Y &=(\cdots(X)_{i_1,r_1}\cdots)_{i_{p-1},r_{p-1}}, &
\curly M &= \Dirsum_{j=1}^{\ell}\curly S^{n_j}\curly B_j, &
Z &=Z(\curly M\to\curly Q \curly S^{n-1}\curly A).
\end{align*}
Then the composition $\rho_{i_p}^{r_p}=\pi_p\compose\iota_p$ is depicted diagrammatically below
$$
\begin{diagram}[width=1em,height=2em]
\curly M	& 		& \rTo	& 		&\curly S^{n-1}\curly A		&		&
\curly S^{n}\curly \curly A	& \rTo	& \curly S^{n-1}\curly A	& \rTo	&\curly Q \curly S^{n-1}\curly A	&	&
\curly M	& 		& \rTo	& 		&\curly S^{n}\curly A		
\\
&\rdTo 	& 	&\ldTo 	&	&	&	&\rdTo	&\dTo	&\ldTo	&	&	&	&\rdTo	&	&	\ldTo &
\\												
&	& Y	&	&	&\lTo_{\pi_p}	&	&	&\text{Gr}^{r_p}(\curly S^{n-1}\curly A)	&	&	&\lTo_{\iota_p}	&
&	&Z	&	&
\end{diagram}
$$
where the notation $\text{Gr}^r(\curly F)$ denotes that the rank of the tautological quotient is $r$.

Starting with the class $1\in K(Z)$, Lemma \ref{lem:iota} implies that $(\iota_p)_{*}(1)$ is the product $(1-\curly M_\bullet/(\curly Q\curly S^{n-1}\curly A)_\bullet)$. Now for any family of variables $\mathbb T$, bundle $\curly F$, and Grassmannization $\Gr{s}{\curly F}$, one has the formal identity
\begin{equation}\label{eqn:rootsident}
{(1- \curly F_\bullet\mathbb T)}=(1-(\curly S\curly F)_\bullet\mathbb T){(1-(\curly Q\curly F)_\bullet\mathbb T)}
\end{equation}
and applying this many times, we can rewrite $(\iota_p)_{*}(1)$ as
\begin{equation}
\prod_{j=1}^\ell \frac{\displaystyle \left(1-\frac{(\curly B_j)_\bullet}{(\curly Q\curly S^{n-1}\curly A)_\bullet}\right)}{\displaystyle\prod_{k=1}^{n_j}\left(1-\frac{(\curly Q\curly S^{n_j-k}\curly B_j)_\bullet}{(\curly Q\curly S^{n-1}\curly A)_\bullet}\right)}.\nonumber
\end{equation}
Using Corollary \ref{cor:IRpfQonly} to compute $(\pi_p)_{*}$ of the above, we obtain that $(\rho_{i_p}^{r_p})_{*}(1)$ is given by
\begin{equation}
\IR_{\al{z}_p=0,\infty} \left( 
\prod_{j=1}^\ell \frac{(1-(\curly B_j)_\bullet \al{z}_p)}{(1-(\curly S^{n-1}\curly A)_\bullet \al{z}_p)} \frac{D_p}{\prod_{k=1}^{n_j}(1-(\curly Q\curly S^{n_j-k}\curly B_j)_\bullet \al{z}_p)}
\right),\nonumber
\end{equation}
but using Equation (\ref{eqn:rootsident}) on the denominator factors $(1-(\curly S^{n-1}\curly A)_\bullet \al{z}_p)$ this can also be rewritten as
\begin{equation}
\IR_{\al{z}_p=0,\infty} \left( 
R_p D_p \frac{\prod_{w=1}^n (1-(\curly Q\curly S^{n-w}\curly A)_\bullet \al{z}_p)}{\prod_{j=1}^\ell \prod_{k=1}^{n_j}(1-(\curly Q\curly S^{n_j-k}\curly B_j)_\bullet \al{z}_p)}
\right).
\end{equation}
Now observe that when the alphabets $\al{z}_{u}$ for $u<p$ are utilized as residue variables to push-forward classes containing only Grothendieck roots corresponding to tautological quotient bundles (as in Corollary \ref{cor:IRpfQonly}) through the rest of the Reineke resolution, the remaining rational function will produce exactly the interference factor $I_p$. The expression above depends only on bundles pulled back to $Y$ from earlier iterations of the Reineke construction, and so Lemma \ref{lem:iota} again applies. Furthermore, the formal algebraic manipulations required to compute each subsequent step in the resolution are completely analogous to those above, and therefore the result of the composition $(\rho_{\al{i}}^{\al{r}})_{*}=(\rho_{i_1}^{r_1})_{*}\compose \cdots \compose (\rho_{i_p}^{r_p})_{*}(1)$ is exactly the expression of Equation (\ref{eqn:main}). This proves Theorem \ref{thm:main}.

\section{Expansion in terms of Grothendieck polynomials}\label{s:GPIR}

Let $\lambda = (\lambda_1,\ldots,\lambda_r)$ be an integer sequence (not necessarily a partition) and $\curly A$ and $\curly B$ vector bundles of respective ranks $n$ and $p$. Let $\A = \{\alpha_i\}_{i=1}^{n}$ and $\mathbb B = \{\beta_j\}_{j=1}^p$ be sets of Grothendieck roots for $\curly A$ and $\curly B$ respectively. Let $\al{z} = \{z_1,\ldots,z_r\}$ and set $l=p-n$. Now define the factors
\begin{align*}
\mu_\lambda(\al{z})	&= \prod_{i=1}^r \left(1-{z_i}\right)^{\lambda_{i}-i} \\
\Delta(\al{z})	&= \prod_{1\leq i<j\leq r}\left(1- \frac{z_j}{z_i}\right) \\
P(\A,\mathbb B,\al{z})	&= \prod_{i=1}^r \frac{\prod_{b\in\mathbb B}\left( 1-b z_i \right)}{\left(1-{z_i}\right)^l \prod_{a\in\mathbb A}\left( 1- a z_i \right)}
\end{align*}
The \emph{double stable $g$-polynomial} $g_\lambda(\curly A - \curly B)$ corresponding to the integer sequence $\lambda$ is defined to be
\begin{equation}\label{eqn:Gdefn}
g_\lambda(\curly A-\curly B) = \IR_{\al{z}=0,\infty}\left( \mu_\lambda(\al{z}) \cdot \Delta(\al{z}) \cdot P(\A,\mathbb B,\al{z}) \cdot \dlog\al{z} \right).
\end{equation}

The definition above was pointed out to the author by Rim\'{a}nyi and Szenes, who promise a proof of the following conjecture (confirmed by the author in many computer experiments) in the upcoming paper \cite{kthom}.

\begin{conj}\label{conj:gisG}
For any vector bundles $\curly A$ and $\curly B$, and any integer sequence $\lambda=(\lambda_1,\ldots,\lambda_r)$, the double stable $g$-polynomial $g_\lambda(\curly A - \curly B)$ defined by Equation (\ref{eqn:Gdefn}) agrees with the double stable Grothendieck polynomial $G_\lambda(\curly A-\curly B)$ defined by Buch, e.g.\ in \cite[Equation~(7)]{BuchQCDT}.
\end{conj}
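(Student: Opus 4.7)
The plan is to reduce Conjecture \ref{conj:gisG} to a Jacobi--Trudi style determinantal identity for $G_\lambda$ via the one-variable case and a Vandermonde antisymmetrization. First, for $r=1$ the formula reads
\[
g_{(k)}(\curly A - \curly B) = \IR_{z=0,\infty}\left( (1-z)^{k-1-l} \frac{\prod_{b \in \mathbb B}(1-bz)}{\prod_{a \in \A}(1-az)}\,\frac{dz}{z}\right),
\]
a univariate contour integral. Using the residue theorem as in Example \ref{ex:res1}, one converts this into minus the sum of residues at the remaining finite poles of the integrand (located at $z = 1/a$ for $a \in \A$, and possibly at $z = 1$), then recognizes the resulting closed form as the standard generating-function expression for the single-row double stable Grothendieck polynomial $G_{(k)}(\curly A - \curly B)$. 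This establishes $g_{(k)} = G_{(k)}$ for every $k \in \Z$.

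Next, for general $r$, the factor $P(\A,\mathbb B,\al{z})$ is $S_r$-symmetric in the variables $z_i$, whereas $\mu_\lambda(\al{z}) = \prod_i (1-z_i)^{\lambda_i - i}$ splits into single-variable pieces tagged by $i$. Treating $\Delta(\al{z})\,\dlog(\al{z})$ as an alternating sum over $S_r$ in the usual Vandermonde-to-determinant maneuver, one permutes the labels in each summand to obtain a determinant whose $(i,j)$ entry is exactly the one-variable residue from the first stage with $k = \lambda_i + j - i$. This yields the formal identity
\[
g_\lambda(\curly A - \curly B) = \det\bigl( g_{(\lambda_i + j - i)}(\curly A - \curly B) \bigr)_{1 \le i,j \le r},
\]
so that combined with the one-variable case it remains to verify the Jacobi--Trudi-type formula $G_\lambda(\curly A - \curly B) = \det \bigl( G_{(\lambda_i + j - i)}(\curly A - \curly B) \bigr)_{1 \le i,j \le r}$ for double stable Grothendieck polynomials.

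The main obstacle lies in this final step. Unlike Schur polynomials, $G_\lambda$ does not in general satisfy a naive Jacobi--Trudi identity: Lenart's determinantal formula for stable Grothendieck polynomials involves $K$-theoretic correction terms expressible via binomial coefficients (or equivalently divided differences). Consequently, the determinant produced in the antisymmetrization step may differ from $G_\lambda$ by such correction terms, and reconciling this with the clean exponent shift $(1-z_i)^{\lambda_i - i}$ in $\mu_\lambda$ is the delicate heart of the argument. I expect the forthcoming proof in \cite{kthom} either to identify a refined ``straight'' Jacobi--Trudi identity satisfied by $G_\lambda$ in the supersymmetric $\curly A - \curly B$ setting, or to characterize both $g_\lambda$ and $G_\lambda$ abstractly (for instance by their behavior under column removal or by the action of isobaric divided differences on the alphabet $\A$) and then verify that they obey the same recursion, which would bypass the need for an explicit Jacobi--Trudi expansion altogether.
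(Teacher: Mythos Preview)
The paper does not prove this statement: it is explicitly labeled a \emph{Conjecture}, attributed to Rim\'{a}nyi and Szenes, and the author states that a proof is promised in the forthcoming reference \cite{kthom}, with only computer-experimental confirmation offered in the present paper. Consequently there is no proof here to compare your proposal against.

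As for your sketch on its own terms: the reduction to a one-variable residue and then an antisymmetrization over $S_r$ is a natural line of attack, and you have correctly located the real difficulty. The discriminant $\Delta(\al{z})=\prod_{i<j}(1-z_j/z_i)$ is not literally the Vandermonde in the variables $1-z_i$ (after the substitution $v_i=1-z_i$ one picks up extra $(1-v_i)^{-1}$ factors, and $\dlog z_i$ becomes $-dv_i/(1-v_i)$), so the determinant you obtain will not have entries $g_{(\lambda_i+j-i)}$ on the nose; the bookkeeping of those stray factors is precisely what produces the $K$-theoretic correction terms in Lenart-type formulas. Your final paragraph already concedes that the naive Jacobi--Trudi identity fails for $G_\lambda$, so as written the argument is a heuristic outline rather than a proof, with the essential step deferred---which is exactly the status the paper itself assigns to the conjecture.
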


In all that follows, we assume the result of Conjecture \ref{conj:gisG} and use only the notation $G_\lambda$ for the (double) stable Grothendieck polynomials. Combining this with our main theorem, we obtain the following steps to expand the class $[\curly O_\Omega]$ in terms of the appropriate Grothendieck polynomials. Using the notation of Theorem \ref{thm:main},
\begin{itemize}
\item For each $i\in Q_0$ collect families of residue variables $\al{z}_k$ such that $i_k=i$, say $\al{z}_{j_1},\ldots,\al{z}_{j_\ell}$.
\item Combine these into the new families $\al{u}_i = \{u_{i1},u_{i2},\ldots,u_{in_i}\}= \al{z}_{j_1}\union\cdots\union\al{z}_{j_\ell}$ where $j_1<\cdots<j_\ell$ and observe that the numerators of the interference factors $I_k$ multiplied with the discriminant factors $D_k$ produce exactly the products $\D(\al{u}_i)$.
\item For each $i\in Q_0$ let $l_i=\rk(\curly E_i) - \rk(\curly M_i)$ and form the rational function $F(\al{u}_i)$ whose denominator is exactly the same as that of the product of all interference factors, but whose numerator is the product $$\prod_{i\in Q_0} \prod_{u\in \al{u}_{i}} \left( 1-u \right)^{-l_i}.$$
\item For all $i$ and $j$, substitute $u_{ij}=1-v_{ij}$ into $F$ and multiply by the factor $\prod_{i\in Q_0}\prod_{j=1}^{n_i} v_{ij}^j$ to form a new rational function $F'$.
\item Expand $F'$ as a Laurent series according the the convention that for any arrow $a\in Q_1$, $v_{t(a)j}<<v_{h(a)k}$ for any $j$ or $k$.
\item Finally, the expansion of $[\curly O_\Omega]$ in Grothendieck polynomials is obtained by interpreting the monomial $$\prod_{i\in Q_0} \al{v_i}^{\lambda_i} \leftrightsquigarrow \prod_{i\in Q_0} G_{\lambda_i}(\curly E_i - \curly M_i)$$ where for the integer sequence $\lambda_i = (\lambda_{i1},\ldots,\lambda_{in_i})$, $\al{v}_i^{\lambda_i}$ denotes the multi-index notation $\prod_{j=1}^{n_i} v_{ij}^{\lambda_{ij}}$, which we adopt throughout the sequel.
\end{itemize}

\begin{example}\label{ex:A2sp}Consider the $A_2$ quiver with vertices labeled $\{1\to 2\}$. Consider the orbit closure $\Omega_m(\curly E_\bullet)$ corresponding to $m_{11}=m_{12}=m_{22}=1$ and hence having dimension vector $(2,2)$. From the directed partition $\Phi^+ = \{\varphi_{22}\}\union\{\varphi_{12},\varphi_{11}\}$ one obtains the resolution pair $\al{i}=(2,1,2)$ and $\al{r}=(1,2,1)$. Following the recipe of Theorem \ref{thm:main}, set \begin{align*} \al{z}_1 &=\{x\} & \al{z}_1 &=\{y_1,y_2\} & \al{z}_3 &=\{z\}. \end{align*} Let $\curly E_\bullet\to X$ be a corresponding generic $Q$-bundle and set $\curly E_1=\curly A$, $\curly E_2=\curly B$, $\mathbb E_1=\{\alpha_1,\alpha_2\}$, $\mathbb E_2=\{\beta_1,\beta_2\}$. Notice this implies that $\mathbb M_1 = \{~\}$ and $\mathbb M_2 = \mathbb E_1 = \{\alpha_1,\alpha_2\}$. Applying the main theorem, we obtain that $[\curly O_{\Omega(\curly E_\bullet)}]$ is equal to applying the operation $\IR_{x=0,\infty}\IR_{y_2=0,\infty}\IR_{y_1=0,\infty}\IR_{z=0,\infty}$ to the differential form $$\left(\prod_{i=1}^2\frac{1-\alpha_i x}{1-\beta_i x}\right)\frac{\left(1-\frac{y_2}{y_1}\right)}{\prod_{i,j=1}^2 (1-\alpha_i y_j)}\left(\prod_{i=1}^2\frac{1-\alpha_i z}{1-\beta_i z}\right)\frac{\left(1-\frac{z}{x} \right)}{\prod_{j=1}^2\left(1-\frac{z}{y_j} \right)}\prod_{k=1}^3 \dlog\al{z}_k.$$ Renaming $x=u_1$ and $z=u_2$ and setting $\al{u}=\{u_1,u_2\}$ and $\al{y}=\al{z}_2=\{y_1,y_2\}$, this is further equal to $$P(\mathbb E_1,\mathbb M_1,\al{y})P(\mathbb E_2,\mathbb M_2,\al{u})\D(\al{y})\D(\al{u})\dlog\al{y}\dlog\al{u}$$ times the rational function $$\frac{1}{\prod_{i=1}^2(1-y_i)^2\prod_{j=1}^2\left(1-\frac{u_2}{y_j}\right)}.$$ Setting $a_i = 1-y_i$ and $b_i=1-u_i$ for $1\leq i \leq 2$, and multiplying the rational function above by $a_1a_2^2b_1b_2^2$ produces the rational function \begin{equation}\label{eqn:A2spQC}\frac{b_1(1-a_1)(1-a_2)}{a_1\left(1-\frac{a_1}{b_2}\right)\left(1-\frac{a_2}{b_2}\right)},\end{equation} and according to the itemized steps above, once this is expanded as a Laurent series, one can read off the quiver coefficients by interpreting $a^Ib^J \leftrightsquigarrow G_I(\curly A)G_J(\curly B-\curly A)$. Since $G_{I,J} = G_I$ whenever $J$ is a sequence of non-positive integers and $G_\emptyset = 1$ (see \cite[Section~3]{BuchGCQV}), the above rational function is equivalent (for our purposes) to the one obtained by setting $b_1=1$, namely the function $a_1^{-1}b_1$ and hence simply to $b_1$. This corresponds to the Grothendieck polynomial $G_1(\curly B-\curly A)$ and we conclude that the quiver efficient $c_{(\emptyset,(1))}(\Omega_m) = 1$ while all others are zero.\end{example}

\begin{example}
Consider the inbound $A_3$ quiver $\{1\to 2 \leftarrow 3\}$, and the same orbit and notation of Example \ref{ex:inA3MT}. Following the itemize list above, in Equation (\ref{eqn:inA3form}) set $t_1=x$, $t_2=z$, and $u_1=v$, $u_2=y_1$, and $u_3=y_2$ to obtain the families $\al{w}=\{w_1,w_2\}$, $\al{u}=\{u_1,u_2,u_3\}$, and $\al{t}=\{t_1,t_2\}$, associated to the vertices $1$, $2$, and $3$ respectively. In the new variables, one checks that $[\curly O_{\Omega(\curly E_\bullet)}]$ is given by applying the iterated residue operation $\IR_{\al{w}=0,\infty}\IR_{\al{t}=0,\infty}\IR_{\al{u}=0,\infty}$ to
$$P(\mathbb E_1,\mathbb M_1,\al{w})P(\mathbb E_2,\mathbb M_2,\al{u})P(\mathbb E_3,\mathbb M_3,\al{t})\Delta(\al{w})\Delta(\al{u})\Delta(\al{t})(\dlog\al{w})(\dlog\al{u})(\dlog\al{t})$$
times the rational function
$$\frac{\prod_{i=1}^3(1-u_i)}{\prod_{i=1}^2(1-w_i)^2\prod_{i=1}^2(1-t_i)^2}\prod_{i=2}^3\frac{1}{\prod_{s\in\{t_1\}\union\al{w}}\left(1-\frac{u_i}{s}\right)}.$$
The order of the residues above is important; in particular, the residues with respect to $\al{u}$ must be done first. In general, for each $a\in Q_1$ the residues with respect to variables corresponding to the vertex $t(a)$ must be computed before those corresponding to the vertex $h(a)$. Comparing the above with Equation (\ref{eqn:Gdefn}) and setting $a_i=1-w_i$, $b_j=1-u_j$, and $c_i=1-t_i$ for $1\leq i\leq 2$ and $1\leq j \leq 3$, observe that the quiver coefficients can be obtained by expanding the rational function 
$$\frac{\left(\prod_{i=1}^2 a_i^i\right)\left(\prod_{i=1}^3 b_i^i\right)\left(\prod_{i=1}^2 c_i^i\right)b_1b_2b_3(1-a_1)^2(1-a_2)^2(1-c_1)^2}{a_1^2a_2^2c_1^2c_2^2(b_2-a_1)(b_2-a_2)(b_2-c_1)(b_3-a_1)(b_3-a_2)(b_3-c_1)}$$
as a Laurent series and using the convention that $$a^Ib^Jc^K \leftrightsquigarrow G_I(\curly E_1)G_J(\curly E_2-\curly E_1\dirsum \curly E_3)G_K(\curly E_3).$$ We recommend rewriting the Laurent series above in the form
$$\frac{b_1^2b_3(1-a_1)^2(1-a_2)^2(1-c_1)^2}{a_1c_1\prod_{s\in\{b_2,b_3\}}\left(1-\frac{a_1}{s}\right)\left(1-\frac{a_2}{s}\right)\left(1-\frac{c_1}{s}\right)},$$
and expanding in the domain $a_j,c_1<<b_i$. In this example, the codimension of $\Omega_m$ is $3$ (cf. Equation (\ref{eqn:Gclass})) and we note that rational factor $b_1^2b_3/(a_1c_1)$ has odd degree. Thus, when the remaining factors are expanded, the signs alternate as desired. The difficulty is that most monomials in this expansion do not correspond to partitions and, as in the previous example, one must use a recursive recipe (see \cite[Equation~(3.1)]{BuchGCQV}) to expand these in the basis $\{G_\lambda\}$ for partitions $\lambda$, introducing new signs in a complicated way. Nonetheless a computation in \emph{Mathematica} confirms that the quiver coefficients are
\begin{align*}
c_{(\emptyset,(2,1),\emptyset)}(\Omega_m) &=1 & c_{((1),(2),\emptyset)}(\Omega_m) & = 1 & c_{((1),(2,1),\emptyset)}(\Omega_m) &=-1
\end{align*}
and all others are zero, which agrees with Equation (\ref{eqn:Gclass}).
\end{example}

\begin{example}[Giambelli-Thom-Porteous formula]
Consider again the $A_2$ quiver with vertices labeled $\{1\to 2\}$. Only now consider the general orbit closure $\Omega_m(\curly E_\bullet)$ corresponding to $m = (m_{11},m_{12},m_{22})$ and hence having dimension vector $(m_{11}+m_{12},m_{12}+m_{22})$. Let $\curly E_\bullet$ be a generic $Q$-bundle and write $e_1=\rk(\curly E_1)$ and $e_2=\rk(\curly E_2)$. From the directed partition $\Phi^+=\{\varphi_{22}\}\union\{\varphi_{12},\varphi_{11}\}$ one obtains the resolution pair $\al{i}=(2,1,2)$ and $\al{r}=(m_{22},e_1,m_{12})$. Observe that the composition of the first two mappings of the Reineke resolution $\rho_{1}^{e_1}\compose\rho_{2}^{m_{22}}$ is a homeomorphism since in the notation of Section \ref{s:PMT}, it represents the sequence of maps
$$
\begin{diagram}[2em]
Z(\curly S \curly E_1\to \curly Q\curly S\curly E_2) & \rTo & \Gr{0}{\curly S\curly E_2} &\rTo &
Z(0\to \curly Q\curly E_1) & \rTo & \Gr{0}{\curly E_1} & \rTo Z(\curly E_1\to\curly Q\curly E_2)
\end{diagram}
$$
and $\curly S \curly E_1$ has rank zero. Hence we need only to compute the image $(\rho_{2}^{m_{22}})_{*}(1)$ and this is equivalent to applying Theorem \ref{thm:main} to the updated resolution pair $\al{i}=(2), \al{r}=(m_{22})$.
The fact that this computation simplifies is related to the fact that in Example \ref{ex:A2sp}, the rational function (\ref{eqn:A2spQC}) can be simplified to a monomial by setting $b_2=1$. We obtain that $$[\curly O_{\Omega(\curly E_\bullet)}] = \IR_{\al{z}=0,\infty}\left(\frac{\left(1 - (\curly E_1)_\bullet\al{z}_\bullet)\right)}{\left(1-(\curly E_2)_\bullet\al{z}_\bullet \right)}\D(\al{z})\dlog\al{z}\right)$$ where $\al{z}=(z_1,\ldots,z_{m_{22}})$.

Following the itemized steps above, we set $l = e_2 - e_1$ and consider the product $\prod_{i=1}^{m_{22}}(1-u_i)^{-l}$ and finally the monomial $\prod_{i=1}^{m_{22}} v_i^{i-l}$ to obtain that $$[\curly O_{\Omega(\curly E_\bullet)}] = G_{(1-l,2-l,\ldots,m_{22}-l)}(\curly E_2 - \curly E_1).$$ Notice that the integer sequence above is strictly increasing and therefore not a partition. However, $G_{I,p-1,p,J}=G_{I,p,p,J}$ for any integer sequences $I$ and $J$ and any integer $p$ (see \cite[Section~3]{BuchGCQV}), and so applying this iteratively above yields the Grothendieck polynomial $G_{R}(\curly E_2-\curly E_1)$ where $R$ is the rectangular partition $(m_{22}-l)^{m_{22}}$. In Example \ref{ex:A2sp} this corresponded to the step $a_1^{-1}b_1 \leadsto b_1$. Finally, if one sets $r=m_{12}$, this has the pleasing form $(e_1-r)^{(e_2-r)}$ (cf.\ \cite[Theorem~2.3]{BuchGCQV}). One thinks of ``$r$" denoting the rank of the map $f:\curly E_1\to \curly E_2$ since after all $\Omega(\curly E_\bullet)$ is actually the degeneracy locus $\{x \in X \st \rk(f) \leq m_{12}\}$. We conclude that the quiver coefficient $c_{(\emptyset,R)}(\Omega) = 1$ and all others are zero.
\end{example}

\bibliographystyle{amsalpha}
\bibliography{KIRbib}

\end{document}